\newtheorem{theorem}{Theorem}[section]
\newtheorem{lemma}[theorem]{Lemma}
\newtheorem{cor}[theorem]{Corollary}
\theoremstyle{definition}
\theoremstyle{remark}
\numberwithin{equation}{section}
\begin{document}

\title{Lattices in Chip-Firing}

\author{Patrick Liscio}

\begin{abstract}

  We analyze the poset of moves in chip-firing, as defined by Klivans and Liscio.  Answering a question of Propp, we show that the move poset forms the join-irreducibles of the poset of configurations.  The proof involves a graph augmentation and an analysis of configurations in which only one firing move is available.  We then use this framework to analyze the problem of chip-firing on a line, where the move poset is relevant to the problem of labeled chip-firing.

\end{abstract}

\maketitle

\section{Introduction}

In a chip-firing process, a collection of indistinguishable chips are placed at the nodes of a graph.  If a node has at least as many chips as it has neighbors, it can ``fire'' by sending one chip to each of its neighbors.  The process terminates if no site has enough chips to fire.

We define a \textbf{chip configuration} on a graph $G$ to be a $\mathbb{N}$-valued function on the vertices of $G$, designating the number of chips at each vertex of $G$.  Given a particular configuration $c$ for which the chip-firing process beginning at $c$ terminates, we can define a $\textbf{configuration poset}$ beginning at $c$ as follows.  Consider every configuration that is reachable from $c$ after some sequence of firing moves.  For two such configurations $c_1$ and $c_2$, we say that $c_1\ge c_2$ if it is possible to go from $c_1$ to $c_2$ through some sequence of firing moves.  Note that the direction of this ordering is reversed from the usual convention, in which later moves are greater than earlier ones.

We then define the \textbf{move poset} for an initial configuration $c$.  It is known that any sequence of firing moves from $c$ to completion must consist of a fixed number of firing moves at each site $k$.  If we define $k^j$ to be the $j^{th}$ firing move at site $k$, then we define $k^j \ge k'^{j'}$ if move $k^{j}$ cannot occur after $k'^{j'}$.  The main result of this paper relates these two posets:

\begin{theorem}
The join-irreducibles of the configuration poset form the move poset.
\end{theorem}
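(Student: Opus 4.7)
My plan is to prove the theorem by setting up a natural bijection between the join-irreducibles of the configuration poset $P$ and the elements of the move poset $M$, and then verifying that this bijection preserves and reflects the order.

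First, I would characterize join-irreducibles intrinsically. In a finite lattice an element is join-irreducible iff it covers exactly one element, and a reachable configuration $c'$ covers exactly one element of $P$ iff exactly one firing move is available at $c'$. By the abelian property each reachable $c'$ carries a well-defined firing vector $f_{c'}$, so I can define a candidate map $\phi(c') = k^{j}$, where $k$ is the unique firable site at $c'$ and $j = f_{c'}(k) + 1$. Injectivity of $\phi$ reduces to the claim that the entire firing vector at $c'$ is determined by the data $(k,j)$ together with the unique-firable-vertex condition, since any reachable configuration is in turn determined by its firing vector.

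Second, and this is the main obstacle, I would prove surjectivity: for every move $k^{j}$ appearing in some firing sequence, there is a reachable configuration at which $k^j$ is the unique available move. A priori this can fail for a bad reason, namely $k^j$ might always be accompanied by other firable moves in every reachable state. Here I would invoke the graph augmentation announced in the abstract: enlarge $G$ to a graph $\tilde G$ in which the chip-firing dynamics is ``slowed'' enough that each individual move $k^j$ is forced to appear as the unique available move in some intermediate configuration, and then exhibit a correspondence between configurations on $\tilde G$ and those on $G$ that pulls these isolated-move witnesses back to $G$. The same augmentation should also support injectivity, by showing that any two $G$-configurations with the same associated data $(k,j)$ lift to a common configuration on $\tilde G$.

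Finally, I would verify that $\phi$ is an order isomorphism. If $c'_1 \ge c'_2$ in $P$, the firing sequence from $c'_1$ to $c'_2$ begins with $\phi(c'_1) = k_1^{j_1}$ and terminates just before $\phi(c'_2) = k_2^{j_2}$, so at least one firing sequence fires $k_1^{j_1}$ before $k_2^{j_2}$. To upgrade this to the stronger condition demanded by the definition of $M$, namely that $k_1^{j_1}$ precedes $k_2^{j_2}$ in \emph{every} firing sequence, I would argue via the lattice structure of $P$: were some sequence to fire $k_2^{j_2}$ first, the resulting configuration $c_\star$ would satisfy $f_{c_\star}(k_1) < j_1$ and $f_{c_\star}(k_2) = j_2$, hence be incomparable with $c'_1$, and taking the join $c'_1 \vee c_\star$ would produce a state whose chip-count balance at $k_2$ violates the unique-firable-vertex condition at $c'_1$. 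The converse direction is symmetric, using that the witness configuration for $k_1^{j_1}$ must be traversed by any path from $c$ to the witness for $k_2^{j_2}$ when $k_1^{j_1}$ globally precedes $k_2^{j_2}$.
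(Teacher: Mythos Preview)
Your overall architecture matches the paper's: set up a bijection between join-irreducible configurations and moves $k^j$ via the ``unique available move'' condition, then check the order. But you have misdiagnosed where the difficulty lies, and this leads to gaps.

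\textbf{Surjectivity is elementary; the augmentation is for injectivity.} You call surjectivity ``the main obstacle'' and propose to attack it with the graph augmentation. In fact existence of a witness configuration for each $k^j$ is a two-line argument (the paper's Lemma~2.1): run any legal sequence until exactly $j-1$ firings have occurred at $k$, then greedily fire every site \emph{other than} $k$ until none of them can fire. The process terminates, so some site is still active; since $k^j$ has not yet occurred, that site must be $k$. No augmentation is needed, and your description of the augmentation as ``slowing'' the dynamics so that $k^j$ is forced to appear alone does not match what the construction actually does.

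The graph augmentation is instead the engine behind \emph{uniqueness} of the witness (your injectivity step). The idea is not to slow anything down but to convert the condition ``$k^j$ is the only available move'' into the condition ``terminal state''. One attaches to $k$ a new vertex $v_1$ via $N$ parallel edges (with $N$ larger than the total chip count), attaches $v_2$ to $v_1$ via $Nj$ edges, and adds $N(j-1)$ extra chips at $k$. Any firing sequence on $G$ that reaches a witness for $k^j$ becomes, verbatim, a legal sequence on $G'$ terminating in a \emph{stable} configuration (the extra $N(j-1)$ chips at $k$ are exactly consumed by the $j-1$ firings into $v_1$, and $v_1$ can never fire). Uniqueness of the terminal state on $G'$ then forces uniqueness of the witness on $G$. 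Your sentence ``any two $G$-configurations with the same associated data $(k,j)$ lift to a common configuration on $\tilde G$'' gestures toward this, but without the ``becomes terminal'' insight the lifting argument has no teeth.

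\textbf{The order arguments are shakier than they need to be.} For the direction $c(k_1^{j_1})\ge c(k_2^{j_2})\Rightarrow k_1^{j_1}\ge k_2^{j_2}$, your proposed contradiction via $c'_1\vee c_\star$ and an unspecified ``chip-count balance'' is not a proof; it is not clear what is being violated. The paper's argument (Lemma~2.4) is more direct: if some sequence fires $k_2^{j_2}$ before $k_1^{j_1}$, run that sequence up to just before $k_1^{j_1}$ and then fire everything except site $k_1$; by the uniqueness lemma you land at $c(k_1^{j_1})$, but along the way $k_2^{j_2}$ has already occurred. Since firing vectors only increase along the poset and $c(k_2^{j_2})$ has $f(k_2)=j_2-1$, the configuration $c(k_2^{j_2})$ is unreachable from $c(k_1^{j_1})$. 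For the converse direction, you claim every path to $c(k_2^{j_2})$ ``must traverse'' $c(k_1^{j_1})$; this is stronger than needed and not obviously true. All that is required (Lemma~2.3) is a single path: from $c(k_1^{j_1})$, fire everything except $k_2^{j_2}$ and later moves at $k_2$; by uniqueness you arrive at $c(k_2^{j_2})$.
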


This provides a justification for using the move poset in order to analyze chip configurations.

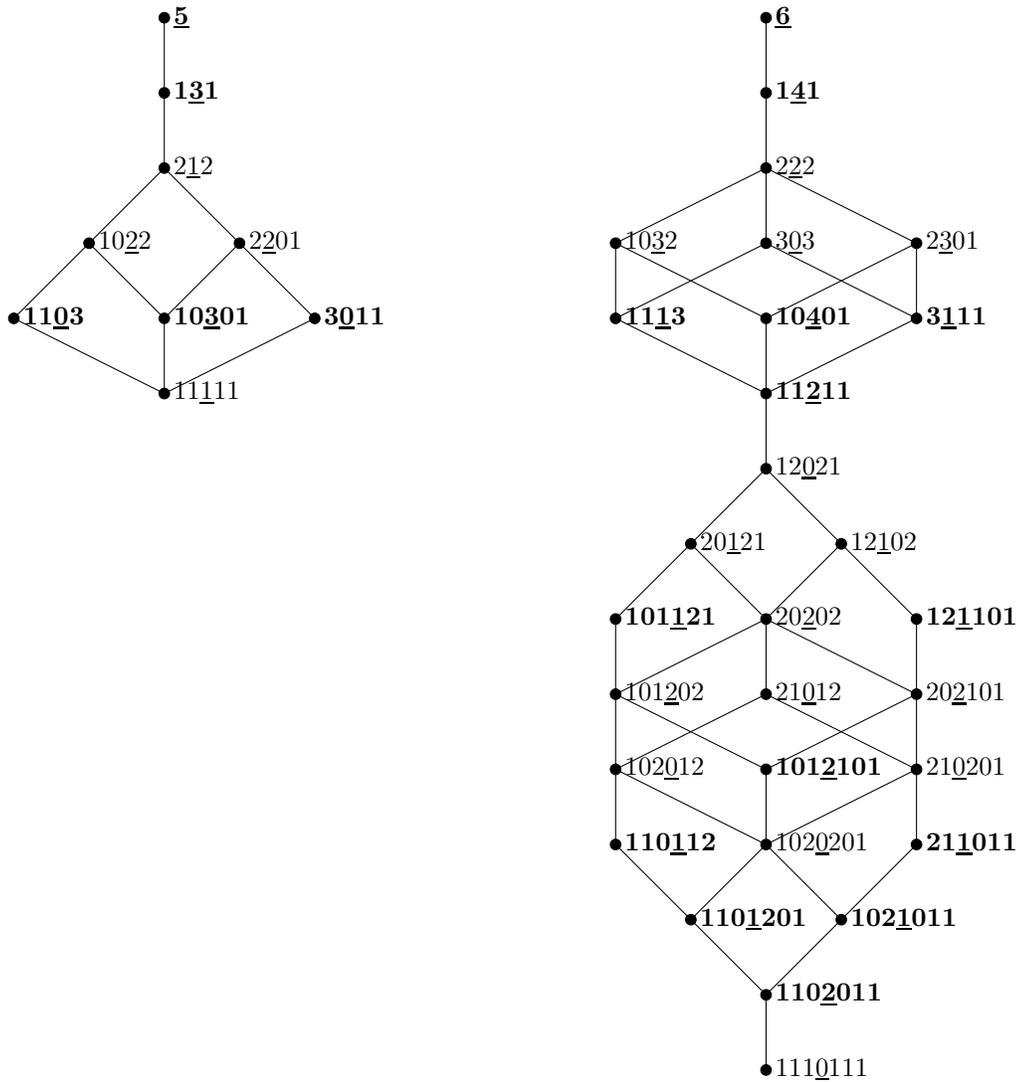
\begin{figure}
\centering
\begin{tikzpicture}[scale=1.0]

\filldraw[black](0,0) circle(2 pt) node[align=center, right] {\textbf{\underline{5}}};
\filldraw[black](0,-1) circle(2 pt) node[align=center, right] {\textbf{1\underline{3}1}};
\filldraw[black](0,-2) circle(2 pt) node[align=center, right] {2\underline{1}2};
\filldraw[black](-1,-3) circle(2 pt) node[align=center, right] {10\underline{2}2};
\filldraw[black](1,-3) circle(2 pt) node[align=center, right] {2\underline{2}01};
\filldraw[black](-2,-4) circle(2 pt) node[align=center, right] {\textbf{11\underline{0}3}};
\filldraw[black](0,-4) circle(2 pt) node[align=center, right] {\textbf{10\underline{3}01}};
\filldraw[black](2,-4) circle(2 pt) node[align=center, right] {\textbf{3\underline{0}11}};
\filldraw[black](0,-5) circle(2 pt) node[align=center, right] {11\underline{1}11};

\draw[black](0,0) -- (0,-2);
\draw[black](0,-2) -- (2,-4);
\draw[black](0,-2) -- (-2,-4);
\draw[black](-1,-3) -- (0,-4);
\draw[black](1,-3) -- (0,-4);
\draw[black](-2,-4) -- (0,-5);
\draw[black](0,-4) -- (0,-5);
\draw[black](2,-4) -- (0,-5);

\filldraw[black](8,0) circle(2 pt) node[align=center, right] {\textbf{\underline{6}}};
\filldraw[black](8,-1) circle(2 pt) node[align=center, right] {\textbf{1\underline{4}1}};
\filldraw[black](8,-2) circle(2 pt) node[align=center, right] {2\underline{2}2};
\filldraw[black](6,-3) circle(2 pt) node[align=center, right] {10\underline{3}2};
\filldraw[black](8,-3) circle(2 pt) node[align=center, right] {3\underline{0}3};
\filldraw[black](10,-3) circle(2 pt) node[align=center, right] {2\underline{3}01};
\filldraw[black](6,-4) circle(2 pt) node[align=center, right] {\textbf{11\underline{1}3}};
\filldraw[black](8,-4) circle(2 pt) node[align=center, right] {\textbf{10\underline{4}01}};
\filldraw[black](10,-4) circle(2 pt) node[align=center, right] {\textbf{3\underline{1}11}};
\filldraw[black](8,-5) circle(2 pt) node[align=center, right] {\textbf{11\underline{2}11}};
\filldraw[black](8,-6) circle(2 pt) node[align=center, right] {12\underline{0}21};
\filldraw[black](7,-7) circle(2 pt) node[align=center, right] {20\underline{1}21};
\filldraw[black](9,-7) circle(2 pt) node[align=center, right] {12\underline{1}02};
\filldraw[black](6,-8) circle(2 pt) node[align=center, right] {\textbf{101\underline{1}21}};
\filldraw[black](8,-8) circle(2 pt) node[align=center, right] {20\underline{2}02};
\filldraw[black](10,-8) circle(2 pt) node[align=center, right] {\textbf{12\underline{1}101}};
\filldraw[black](6,-9) circle(2 pt) node[align=center, right] {101\underline{2}02};
\filldraw[black](8,-9) circle(2 pt) node[align=center, right] {21\underline{0}12};
\filldraw[black](10,-9) circle(2 pt) node[align=center, right] {20\underline{2}101};
\filldraw[black](6,-10) circle(2 pt) node[align=center, right] {102\underline{0}12};
\filldraw[black](8,-10) circle(2 pt) node[align=center, right] {\textbf{101\underline{2}101}};
\filldraw[black](10,-10) circle(2 pt) node[align=center, right] {21\underline{0}201};
\filldraw[black](6,-11) circle(2 pt) node[align=center, right] {\textbf{110\underline{1}12}};
\filldraw[black](8,-11) circle(2 pt) node[align=center, right] {102\underline{0}201};
\filldraw[black](10,-11) circle(2 pt) node[align=center, right] {\textbf{21\underline{1}011}};
\filldraw[black](7,-12) circle(2 pt) node[align=center, right] {\textbf{110\underline{1}201}};
\filldraw[black](9,-12) circle(2 pt) node[align=center, right] {\textbf{102\underline{1}011}};
\filldraw[black](8,-13) circle(2 pt) node[align=center, right] {\textbf{110\underline{2}011}};
\filldraw[black](8,-14) circle(2 pt) node[align=center, right] {111\underline{0}111};

\draw[black](8,0) -- (8,-3);
\draw[black](8,-2) -- (6,-3);
\draw[black](8,-2) -- (10,-3);
\draw[black](6,-3) -- (6,-4);
\draw[black](6,-3) -- (8,-4);
\draw[black](8,-3) -- (6,-4);
\draw[black](8,-3) -- (10,-4);
\draw[black](10,-3) -- (8,-4);
\draw[black](10,-3) -- (10,-4);
\draw[black](6,-4) -- (8,-5);
\draw[black](8,-4) -- (8,-5);
\draw[black](10,-4) -- (8,-5);
\draw[black](8,-5) -- (8,-6);
\draw[black](8,-6) -- (6,-8);
\draw[black](8,-6) -- (10,-8);
\draw[black](7,-7) -- (8,-8);
\draw[black](9,-7) -- (8,-8);
\draw[black](6,-8) -- (6,-9);
\draw[black](8,-8) -- (6,-9);
\draw[black](8,-8) -- (8,-9);
\draw[black](8,-8) -- (10,-9);
\draw[black](10,-8) -- (10,-9);
\draw[black](6,-9) -- (6,-10);
\draw[black](6,-9) -- (8,-10);
\draw[black](8,-9) -- (6,-10);
\draw[black](8,-9) -- (10,-10);
\draw[black](10,-9) -- (8,-10);
\draw[black](10,-9) -- (10,-10);
\draw[black](6,-10) -- (6,-11);
\draw[black](6,-10) -- (8,-11);
\draw[black](8,-10) -- (8,-11);
\draw[black](10,-10) -- (8,-11);
\draw[black](10,-10) -- (10,-11);
\draw[black](6,-11) -- (8,-13);
\draw[black](10,-11) -- (8,-13);
\draw[black](8,-11) -- (7,-12);
\draw[black](8,-11) -- (9,-12);
\draw[black](8,-13) -- (8,-14);

\end{tikzpicture}
\caption{Configuration posets for chip-firing on a line with $n=5$ and $n=6$ chips.  5 is the smallest number of chips for which the poset does not form a distributive lattice, and 6 is the largest number of chips for which it does form a distributive lattice.  The number of chips at the origin is underlined, while the join irreducibles are in bold.}
  \label{figconfig56}
\setlength{\belowcaptionskip}{-10pt}
\end{figure}

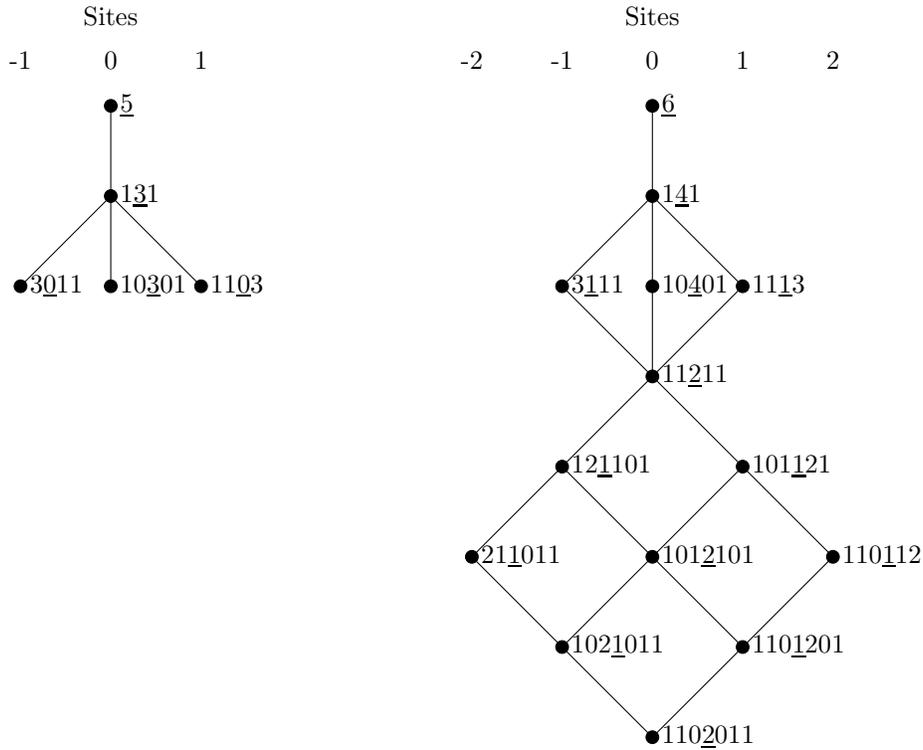
\begin{figure}
\centering
\begin{tikzpicture}[scale=1.2]

\node at (0,1) {Sites};
\node at (-1,0.5) {-1};
\node at (0,0.5) {0};
\node at (1,0.5) {1};

\filldraw[black](0,0) circle(2 pt) node[align=center, right] {\underline{5}};
\filldraw[black](0,-1) circle(2 pt) node[align=center, right] {1\underline{3}1};
\filldraw[black](0,-2) circle(2 pt) node[align=center, right] {10\underline{3}01};
\filldraw[black](1,-2) circle(2 pt) node[align=center, right] {11\underline{0}3};
\filldraw[black](-1,-2) circle(2 pt) node[align=center, right] {3\underline{0}11};

\draw[black](0,-1) -- (1,-2);
\draw[black](0,-1) -- (-1,-2);
\draw[black](0,0) -- (0,-2);

\node at (6,1) {Sites};
\node at (4,0.5) {-2};
\node at (5,0.5) {-1};
\node at (6,0.5) {0};
\node at (7,0.5) {1};
\node at (8,0.5) {2};

\filldraw[black](6,0) circle(2 pt) node[align=center, right] {\underline{6}};
\filldraw[black](6,-1) circle(2 pt) node[align=center, right] {1\underline{4}1};
\filldraw[black](6,-2) circle(2 pt) node[align=center, right] {10\underline{4}01};
\filldraw[black](6,-3) circle(2 pt) node[align=center, right] {11\underline{2}11};
\filldraw[black](6,-5) circle(2 pt) node[align=center, right] {101\underline{2}101};
\filldraw[black](6,-7) circle(2 pt) node[align=center, right] {110\underline{2}011};
\filldraw[black](7,-2) circle(2 pt) node[align=center, right] {11\underline{1}3};
\filldraw[black](5,-2) circle(2 pt) node[align=center, right] {3\underline{1}11};
\filldraw[black](7,-4) circle(2 pt) node[align=center, right] {101\underline{1}21};
\filldraw[black](5,-4) circle(2 pt) node[align=center, right] {12\underline{1}101};
\filldraw[black](7,-6) circle(2 pt) node[align=center, right] {110\underline{1}201};
\filldraw[black](5,-6) circle(2 pt) node[align=center, right] {102\underline{1}011};
\filldraw[black](8,-5) circle(2 pt) node[align=center, right] {110\underline{1}12};
\filldraw[black](4,-5) circle(2 pt) node[align=center, right] {21\underline{1}011};

\draw[black](6,-1) -- (7,-2);
\draw[black](6,-1) -- (5,-2);
\draw[black](6,-3) -- (7,-4);
\draw[black](6,-3) -- (5,-4);
\draw[black](6,-5) -- (7,-6);
\draw[black](6,-5) -- (5,-6);
\draw[black](7,-4) -- (8,-5);
\draw[black](5,-4) -- (4,-5);
\draw[black](7,-2) -- (6,-3);
\draw[black](5,-2) -- (6,-3);
\draw[black](7,-4) -- (6,-5);
\draw[black](5,-4) -- (6,-5);
\draw[black](7,-6) -- (6,-7);
\draw[black](5,-6) -- (6,-7);
\draw[black](8,-5) -- (7,-6);
\draw[black](4,-5) -- (5,-6);
\draw[black](6,0) -- (6,-3);

\end{tikzpicture}
\caption{The join-irreducibles of the configuration poset for chip-firing on a line with $n=5$ and $n=6$.  Each ``column'' in the hasse diagram corresponds to one site being ready to fire, with one join-irreducible corresponding to each firing move at that site.}
  \label{figjoin56}
\setlength{\belowcaptionskip}{-10pt}
\end{figure}

The theorem is illustrated in figures \ref{figconfig56} and \ref{figjoin56}.  We put $n$ chips at the origin of a 1 dimensional grid and then fire to completion.  Figure \ref{figconfig56} shows the configuration posets for $n=5$ and $n=6$, while Figure \ref{figjoin56} shows the join irreducibles of those posets, which are isomorphic to the move posets in those two cases.

Much work has been done in characterizing the configuration posets induced from chip-firing games.  Latapy and Phan show that this poset is a lattice for any chip-firing game \cite{LP00}.  In a finite chip-firing game, this implies global confluence, in which any terminating chip-firing game must have a unique final configuration.  Even in the infinite case, it implies that any two reachable configurations have a unique ``first'' configuration that can be reached from both of them.

Specifically, the lattice formed by the configurations of a chip-firing game must be upper locally distributive: the interval between an element and the meet of all its upper covers is a hypercube.  The class of lattices formed by configurations of chip-firing games furthermore includes the set of distributive lattices  \cite{MPV01} \cite{Magnien03}.

The move poset is particularly relevant to the notion of confluence.  As stated above, a terminating chip-firing process is globally confluent.  While this is implied by the lattice structure of the configuration poset, it is usually proven using a local confluence property: for any two configurations available from a given configuration after one move, there is a common configuration reachable from both resulting configurations in one additional move \cite{Klivans18}.  This property, combined with Newman's Lemma on abstract rewriting systems \cite{Newman42}, gives the global confluence property of chip-firing.

While chip-firing exhibits both local and global confluence, there are other similar systems that still have global confluence without the corresponding local property.  These include flow-firing \cite{FK19}, labeled chip-firing \cite{HMP16} \cite{KL20}, and root system chip-firing \cite{GHMP1} \cite{GHMP2}.

In labeled chip-firing, $n$ chips labeled from 1 to $n$ are placed at the origin of a 1D grid.  As in the traditional, unlabeled chip-firing process, a firing move consists of choosing 2 chips at the same site and sending one to the left and one to the right.  In this case, the chips are distinguishable, so we add the condition that for the two chips chosen, the smaller one is sent to the left and the larger one is sent to the right.  When $n$ is even, this process is still globally confluent, as the final positions of the $n$ chips must be in sorted order.  The structure of the move poset guarantees that a relatively small collection of locally confluent moves at the end of the process guarantee that all inversions between chips will be removed by the end of the process \cite{KL20}.

In section 2, we will prove that the elements of the move poset form the join-irreducibles of the configuration poset.  In section 3, we will show that for a particular class of chip-configurations on a 1-D grid, the lattice of configurations is in fact distributive.

\section{Join-irreducibles of configuration reachability lattice}

In this section, we prove Theorem \ref{join}: the join-irreducibles of the configuration poset form the move poset for any terminating chip-firing process.  Note that for a finite poset, the join-irreducibles correspond to elements that cover exactly one other element, so in a terminating chip-firing game, join-irreducibles correspond to configurations in which exactly one site is ready to fire.  For the results below, we assume a terminating chip-firing process beginning with a configuration $c$.

\begin{lemma}\label{onlymove}
For every firing move $k^j$, there exists a configuration, reachable from $c$, in which $k^j$ is the only move available.
\end{lemma}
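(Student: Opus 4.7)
The plan is to construct the required configuration by running a constrained version of the chip-firing process from $c$. Specifically, starting at $c$, repeatedly fire any legal move that is not of the form $k^i$ with $i \ge j$; in other words, impose a cap of $j-1$ firings on site $k$ and otherwise fire greedily until no legal non-forbidden move is available. Call the resulting configuration $c^*$. The process must terminate because the number of firings at each site is bounded above by the fixed total firing count of that site in the unconstrained terminating process, and by construction $c^*$ is reachable from $c$.

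Two claims about $c^*$ finish the proof. First, at $c^*$ no move other than $k^j$ can be legal: any legal move at a site other than $k$, or of the form $k^i$ with $i < j$, is not forbidden and so would have been fired by the greedy rule; thus the only candidate for a legal move is at site $k$ with index $\ge j$, which, since at most $j-1$ firings have occurred at $k$, must be $k^j$ itself.

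Second, $k^j$ really is legal at $c^*$, which I prove by contradiction. If $k^j$ were not legal, then by the previous paragraph no move at all would be legal, making $c^*$ a terminal configuration of the \emph{unconstrained} chip-firing game. Global confluence would then force $c^*$ to equal the unique terminal configuration of the unconstrained process, and the total firings we performed to equal the full terminating odometer---whose $k$-coordinate is at least $j$, contradicting the constraint. Hence $k^j$ is legal; moreover the firings at $k$ to reach $c^*$ must equal exactly $j-1$ (otherwise the greedy rule would have fired some $k^i$ with $i < j$ that was legal, or we would again be stuck at the terminal configuration), so $k^j$ is the unique available move at $c^*$.

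The conceptual step is the constrained-process construction; once this is set up, both verifications are routine applications of the abelian property and global confluence recalled in the introduction. I expect the main subtlety to lie in formulating the right constraint so that exactly $k^j$ is isolated as the holdout, rather than in the subsequent verification.
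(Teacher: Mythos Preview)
Your proof is correct and follows essentially the same approach as the paper: construct a configuration by firing everything except the $j$-th (and later) moves at site $k$, then argue that the result cannot be terminal (since $k^j$ has not yet occurred) and hence $k$ must be ready to fire with $k^j$ as the unique available move. The only cosmetic difference is that the paper first follows a complete firing sequence until exactly $j-1$ firings at $k$ have occurred and then fires all other sites, whereas you impose the cap from the outset and fire greedily; by the abelian property these yield the same configuration, and your slightly longer verification that exactly $j-1$ firings at $k$ have taken place is the price of the more uniform setup.
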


\begin{proof}
We construct such a configuration as follows.  Consider a firing sequence $s$ that begins at $c$ and runs until completion.  Create a new firing sequence $s'$ that duplicates $s$ until $j-1$ firing moves have been performed at site $k$.  After reaching this move, proceed to perform all possible firing moves at sites other than $k$, until no such moves are available.  Once this is done, no sites other than $k$ may fire.  Furthermore, since move $k^j$ has not yet occurred, the configuration cannot be a final configuration in the process.  Therefore, site $k$ must be ready to fire, and the resulting firing move at that site is $k^j$.
\end{proof}

\begin{lemma}
The configuration from Lemma \ref{onlymove} is unique for each $k^j$.
\end{lemma}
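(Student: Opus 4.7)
The plan is to argue by contradiction. Suppose that two distinct configurations $c_1 \ne c_2$ reachable from $c$ both have $k^j$ as their unique available move. Write $f_1, f_2 \in \mathbb{N}^V$ for their firing vectors from $c$; the hypothesis that site $k$ has fired exactly $j-1$ times in each forces $f_1(k) = f_2(k) = j-1$. The strategy is to produce a common predecessor $c_0$ from which both $c_1$ and $c_2$ are reached by sequences that never fire site $k$, and then apply the abelian property to the chip-firing process in which $k$ is frozen.

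For the predecessor, I would take $c_0 := c_1 \vee c_2$ in the configuration lattice, whose existence is guaranteed by Latapy--Phan \cite{LP00}. The key input is the identification
\[
f(c_0) = \min(f_1, f_2)
\]
componentwise: this is a standard feature of the chip-firing lattice, equivalent to the statement that the set of achievable firing vectors from $c$ is closed under componentwise minimum, and it can be extracted from local confluence. In particular $f(c_0)(k) = j-1$, so any valid firing sequence from $c_0$ to $c_1$ fires site $i$ exactly $f_1(i)-f(c_0)(i)$ times, which at $i=k$ is zero. Thus $c_0$ reaches both $c_1$ and $c_2$ through sequences that avoid site $k$ altogether.

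Now consider the restricted chip-firing game on the same graph with site $k$ forbidden to fire (i.e., treated as a sink). This is itself a chip-firing process, so the diamond/abelian property applies: from any starting configuration, every maximal sequence of allowed firings reaches the same terminal state, provided it terminates. Starting from $c_0$, the two sequences described above realize $c_1$ and $c_2$ as reachable states, and each is terminal for the restricted game because the only move available at $c_1$ (respectively $c_2$) in the unrestricted game was $k^j$, which is now forbidden. Uniqueness of the terminal state then forces $c_1 = c_2$, contradicting our assumption.

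I expect the main obstacle to be the justification of $f(c_1 \vee c_2) = \min(f_1, f_2)$; once this is in hand the remainder of the argument is a clean application of the abelian property on the frozen-$k$ game, using only that restricting to non-$k$ firings yields a terminating chip-firing process with $c_1$ and $c_2$ as maximal reachable states. The other steps (verifying that $c_0 \ne c_1, c_2$ unless $c_1 = c_2$, and that the restricted game does terminate since it reaches $c_1$) are routine.
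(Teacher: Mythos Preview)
Your argument is correct, and it takes a genuinely different route from the paper's. The paper proves uniqueness by a graph augmentation: it attaches a high-degree pendant to $k$ and adds $N(j-1)$ extra chips so that, in the augmented graph $G'$, site $k$ can fire at most $j-1$ times. Any configuration satisfying Lemma~\ref{onlymove} then becomes a \emph{final} configuration of the augmented game, and uniqueness follows from global confluence on $G'$. Your approach instead stays on $G$: you locate a common predecessor $c_0 = c_1 \vee c_2$ with $f(c_0)(k) = j-1$ via the $\min$-formula for the join, and then invoke confluence of the sink-$k$ game from $c_0$.

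Both arguments encode the same idea---freeze $k$ once it has fired $j-1$ times and appeal to confluence---but realize the freezing differently. The paper's augmentation is more self-contained: it needs only the uniqueness of terminal configurations in ordinary chip-firing, nothing about the lattice structure or firing-vector formulas for joins. Your version leans on the identification $f(c_1 \vee c_2) = \min(f_1,f_2)$, which you rightly flag as the step requiring outside input (it is standard, e.g.\ implicit in Latapy--Phan, but not proved in this paper). In exchange, your argument avoids the somewhat ad hoc gadget and is arguably more in the spirit of the surrounding material, since the paper is explicitly about the lattice of configurations. Note also that your use of the join is essential and not merely cosmetic: an arbitrary common upper bound only gives $f(c_0)(k) \le j-1$, and you genuinely need equality to ensure the $c_0 \to c_i$ sequences avoid $k$.
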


\begin{proof}
Given a graph $G$ with initial chip configuration $c$, and a firing move $k^j$, we create an augmented graph $G'$ as follows.  Choose an $N$ greater than the number of chips in $c$.  Add vertices $v_1$ and $v_2$ to $G$, with $N$ edges from $k$ to $v_1$ and $Nj$ edges from $v_1$ to $v_2$.  Also add an additional $N(j-1)$ chips to site $k$ to create initial chip configuration $c'$.

Now, suppose that we are given a configuration $c_1$ meeting the conditions of Lemma 2.1, with corresponding firing sequence $s_1$ that goes from configuration $c$ to configuration $c_1$.  We then apply the firing moves in $s_1$ to the augmented graph $G'$ to produce a new configuration $c_1'$.  All firing moves that take place at sites in $G \backslash \{ k \}$ are exactly the same.  All moves at site $k$ send the same number of chips to adjacent sites in $G$, and they send an additional $N$ chips from $K$ to $v_1$.  $v_1$ cannot fire, because it has more incident edges than there are chips in the entire configuration, and $v_2$ never has any chips.  Thus, after $s_1$ is applied to the configuration on $G'$, all sites in $G \backslash \{ k \}$ have the same number of chips as they do in configuration $c_1$.  Site $k$ started off with $N(j-1)$ extra chips and has fired $j-1$ times, so it also has as many chips as it did in configuration $c_1$.  Furthermore, all firing moves performed were legal moves, and no site in $G'$ can fire after these moves have been performed, so the resulting configuration is a final configuration of the chip-firing process beginning with configuration $c'$ on $G'$.

Since final configurations of a finite chip-firing process are unique, $c_1'$ must be the unique final configuration of the chip-firing process beginning with configuration $c'$.  Furthermore, all sites in $G$ have the same number of chips in $c_1'$ as they do in $c_1$, so $c_1$ must be the only possible configuration meeting the conditions of Lemma 2.1.
\end{proof}

We now have a bijection between the move poset and the join-irreducibles of the configuration poset, which comes from each move having a unique configuration in which it is the only firing move available.  We will show that the posets themselves are isomorphic.

Define $c(k^j)$ to be the (unique) configuration in which $k^j$ is the only firing move available.

\begin{lemma}
If move $k^j$ must occur before move $k'^{j'}$, then $c(k'^{j'})$ is reachable from $c(k^j)$.
\end{lemma}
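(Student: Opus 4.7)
The plan is to start at $c(k^j)$, fire its unique available move $k^j$, and then greedily fire legal moves other than $k'^{j'}$ until no such move remains. I will argue that the resulting configuration is exactly $c(k'^{j'})$.

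The hypothesis that $k^j$ must precede $k'^{j'}$ enters in only one place, namely to guarantee that $k'^{j'}$ has not yet been fired at $c(k^j)$. To see this, take any firing sequence $s_1$ from $c$ to $c(k^j)$ (which exists by Lemma \ref{onlymove}) and extend it to a terminating sequence $s$. Because $k^j$ is still available at $c(k^j)$, it has not been fired in $s_1$, so $k^j$ occurs strictly beyond $s_1$ in $s$; the hypothesis then forces $k'^{j'}$ to occur even later in $s$, so $k'^{j'}$ is not in $s_1$ either.

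Next I fire $k^j$ from $c(k^j)$ to reach $c_1$, and from $c_1$ greedily fire any legal move other than $k'^{j'}$; let $c_2$ be the state reached when no such move remains. This terminates because the chip-firing process from $c$ is assumed terminating, which by the standard Abelian argument bounds the length of every legal firing sequence starting at $c$. Along the full path $c \to c(k^j) \to c_1 \to \cdots \to c_2$ the move $k'^{j'}$ is never performed, so $k'$ has fired at most $j' - 1$ times in $c_2$.

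Since the global terminal configuration requires $k'$ to fire its full quota of at least $j'$ times, $c_2$ cannot be terminal, so some move must be legal at $c_2$. By construction the only candidate is $k'^{j'}$, which is therefore the unique legal move at $c_2$. The uniqueness established in Lemma 2.2, applied to $k'^{j'}$, then forces $c_2 = c(k'^{j'})$. I expect the main subtlety to be the opening observation that the hypothesis is exactly what rules out $k'^{j'}$ having already occurred in $c(k^j)$; without it the greedy procedure would terminate at the global final configuration rather than at $c(k'^{j'})$. Once this point is in hand, the remainder is a clean combination of termination, the Abelian property, and the previous uniqueness result.
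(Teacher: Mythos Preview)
Your proof is correct and follows essentially the same approach as the paper: starting from $c(k^j)$, greedily fire all legal moves except $k'^{j'}$, then invoke uniqueness (Lemma~2.2) to identify the stopping point as $c(k'^{j'})$. Your write-up is more explicit than the paper's about why $k'^{j'}$ has not yet occurred at $c(k^j)$ and why the stopping state is not terminal; the only superfluous step is separately firing $k^j$ at the outset, since the greedy procedure would do this anyway as the sole legal non-$k'^{j'}$ move.
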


\begin{proof}
Since $k^j$ must occur before $k'^{j'}$, move $k'^{j'}$ must not happen before configuration $c(k^j)$ is reached.  Thus, if we start with $c(k^j)$ and perform all available firing moves other than $k'^{j'}$, we will eventually reach a state in which $k'^{j'}$ is the only move available.  By Lemma 2.2, this is the unique state $c(k'^{j'})$, so $c(k'^{j'})$ is reachable from $c(k^j)$.
\end{proof}

\begin{lemma}
If $k'^{j'}$ can occur before $k^j$, then $c(k'^{j'})$ is not reachable from $c(k^j)$.

\begin{proof}
Consider some firing sequence $s$ in which $k'^{j'}$ occurs before $k^j$.  We create another sequence by running sequence $s'$ up to $k^j$.  Instead of performing $k^j$, perform all other available moves except for $k^j$.  The resulting state is $c(k^j)$.  Thus, $k'^{j'}$ can occur before reaching configuration $c(k^j)$, and since a configuration uniquely corresponds to the moves that produced it, move $k'^{j'}$ must occur before reaching configuration $c(k^j)$.  Since $k'^{j'}$ cannot occur before reaching configuration $c(k'^{j'})$, it is not possible to reach $c(k'^{j'})$ from $c(k^j)$.
\end{proof}
\end{lemma}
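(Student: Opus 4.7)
The plan is to exploit monotonicity of firing counts together with the uniqueness result of Lemma 2.2. The key observation is that the configuration $c(k'^{j'})$ is, by definition, one in which $k'^{j'}$ has not yet been performed, so the firing count at site $k'$ in $c(k'^{j'})$ is exactly $j'-1$. If I can show that in $c(k^j)$ the site $k'$ has already fired at least $j'$ times, then since firing counts are monotone non-decreasing along any firing sequence, $c(k'^{j'})$ cannot be reached from $c(k^j)$.

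To set this up, I would take a firing sequence $s$ from $c$ in which $k'^{j'}$ precedes $k^j$, and truncate $s$ to the prefix $s_0$ ending just before the occurrence of $k^j$. Then I would extend $s_0$ by performing every available firing move other than one at site $k$, continuing until no such move remains. By the same reasoning used in Lemma 2.1, at the end of this extension site $k$ must still be ready to fire while no other site can fire, and because no firings at $k$ occur during the extension, the impending firing at $k$ is still its $j$-th. By Lemma 2.2 the resulting configuration is therefore exactly $c(k^j)$. Along this extended sequence, move $k'^{j'}$ has already been executed inside the prefix $s_0$, so the firing count at site $k'$ in $c(k^j)$ is at least $j'$.

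It remains to convert this into a statement about reachability from $c(k^j)$. Since every firing move only increments firing counts, any configuration reachable from $c(k^j)$ has $k'$ fired at least $j'$ times. But as noted, $c(k'^{j'})$ has $k'$ fired only $j'-1$ times. Moreover, the configuration itself determines the vector of firing counts (the abelian property of chip-firing — the chip configuration shift is a fixed linear function of these counts), so the firing-count vector is a genuine invariant obstructing reachability, and we conclude $c(k'^{j'})$ is unreachable from $c(k^j)$.

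The main thing to be careful about is the extension step: one must justify that it is legitimate to extend $s_0$ by moves avoiding site $k$ without being blocked, and that this extension really terminates in $c(k^j)$ rather than in some other configuration where $k^{j''}$ (for some $j'' \ne j$) becomes the only available move. The first point follows because all moves available from $s_0$'s endpoint remain legal until performed, and the second point is where it is essential that no firing at $k$ is added during the extension, pinning the next $k$-firing to be its $j$-th. Once these are in hand, monotonicity of firing counts does the rest.
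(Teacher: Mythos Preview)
Your proposal is correct and follows essentially the same route as the paper: truncate a firing sequence just before $k^j$, extend by performing all moves except at site $k$ to reach $c(k^j)$ (invoking Lemma~2.2 for uniqueness), observe that $k'^{j'}$ has already occurred along this path, and conclude via monotonicity of firing counts that $c(k'^{j'})$ is unreachable. Your write-up is slightly more explicit about the abelian property and the firing-count invariant, but the argument is the same.
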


\begin{theorem}\label{join}
The join-irreducibles of the configuration poset form the move poset.
\end{theorem}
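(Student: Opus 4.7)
The plan is to package Lemmas 2.1--2.4 into a single isomorphism by defining $\Phi\colon k^j \mapsto c(k^j)$ and showing this is an order isomorphism from the move poset onto the subposet of join-irreducibles of the configuration poset. As noted at the start of this section, the join-irreducibles of the configuration poset are exactly the reachable configurations with a unique available firing move, so $\Phi$ automatically lands in that set by the defining property of $c(k^j)$ from Lemma 2.1.

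For bijectivity, I would first observe that injectivity of $\Phi$ is immediate from Lemma 2.2: two distinct moves cannot both be the unique available move at one shared configuration. For surjectivity, take any join-irreducible $c^\ast$ reachable from $c$. It has exactly one available firing move, which I label $k^j$, where $j$ is determined by the abelian property of chip-firing as the number of times site $k$ has already fired on any legal sequence from $c$ to $c^\ast$. Then $c^\ast$ satisfies the hypothesis of Lemma 2.1 for the move $k^j$, and the uniqueness in Lemma 2.2 yields $c^\ast = c(k^j)$.

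To verify that $\Phi$ is an order isomorphism, I would combine Lemmas 2.3 and 2.4. For any two distinct moves $k^j$ and $k'^{j'}$ (each of which occurs in every complete firing sequence, again by the abelian property), the conditions ``$k^j$ must occur before $k'^{j'}$'' and ``$k'^{j'}$ can occur before $k^j$'' are logical complements. The two lemmas therefore package into the equivalence $k^j \geq k'^{j'}$ in the move poset if and only if $c(k^j) \geq c(k'^{j'})$ in the configuration poset, and the theorem follows by pairing this with the bijection from the previous paragraph.

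The most delicate step is the surjectivity argument: one must justify that the integer $j$ attached to the unique available move at a join-irreducible is well-defined. This is precisely where the standard abelian property of chip-firing is invoked, since it guarantees that the firing count of each site along any legal sequence from $c$ depends only on the resulting configuration and not on the chosen history.
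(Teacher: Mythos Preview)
Your proposal is correct and follows essentially the same approach as the paper: use Lemmas~2.1 and~2.2 to set up the bijection $k^j \mapsto c(k^j)$ between moves and join-irreducible configurations, and then invoke Lemmas~2.3 and~2.4 to obtain the order equivalence $k^j \ge k'^{j'} \iff c(k^j) \ge c(k'^{j'})$. Your write-up is in fact slightly more careful than the paper's, making explicit the surjectivity step (via the abelian property determining $j$) and the fact that the hypotheses of Lemmas~2.3 and~2.4 are logical complements for distinct moves.
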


\begin{proof}
Every join-irreducible of the configuration poset corresponds to the single move that can occur from that configuration.  By Lemmas 2.1 and 2.2, each firing move corresponds to a unique one of these join-irreducible configurations.  By Lemmas 2.3 and 2.4, $k^j\ge k'^{j'}$ in the move poset iff $c(k^j) \ge c(k'^{j'})$ in the configuration poset, so the join-irreducibles of the configuration poset form the move poset.
\end{proof}

\section{Distributive Lattices in Chip-Firing on the Line}

We now turn to a specific class of chip configurations: chip-firing on a 1-dimensional grid with $n$ chips.  This is the setup for the problem of labeled chip-firing, in which the chips are given labels from 1 to $n$ and fired in a manner that always sends smaller chips to the left and larger ones to the right.  The move posets for select values of $n$ are shown in figures \ref{fign10} and \ref{fign20}.  For any $m$, the number of firing moves at each site is the same for $n=2m$ and $n=2m+1$, so the move posets are similar in those two cases.  Note the diamonds that appear at the bottom of the diagrams for $n=10$ and $n=20$.  These are indeed known to exist for any even $n$ \cite{KL20}.

\begin{figure}
\centering
\begin{tikzpicture}[scale=0.4]

\node at (0,2) {Sites};
\node at (-4,1) {-4};
\node at (-3,1) {-3};
\node at (-2,1) {-2};
\node at (-1,1) {-1};
\node at (0,1) {0};
\node at (1,1) {1};
\node at (2,1) {2};
\node at (3,1) {3};
\node at (4,1) {4};

\node at (10,2) {Sites};
\node at (6,1) {-4};
\node at (7,1) {-3};
\node at (8,1) {-2};
\node at (9,1) {-1};
\node at (10,1) {0};
\node at (11,1) {1};
\node at (12,1) {2};
\node at (13,1) {3};
\node at (14,1) {4};

\draw[black, thick] (0,0) -- (0,-10);
\draw[black, thick] (1,-2) -- (1,-9);
\draw[black, thick] (-1,-2) -- (-1,-9);
\draw[black, thick] (2,-5) -- (2,-9);
\draw[black, thick] (-2,-5) -- (-2,-9);
\draw[black, thick] (0,-1) -- (1,-2);
\draw[black, thick] (0,-1) -- (-1,-2);
\draw[black, thick] (0,-3) -- (2,-5);
\draw[black, thick] (0,-4) -- (1,-5);
\draw[black, thick] (0,-3) -- (-2,-5);
\draw[black, thick] (0,-4) -- (-1,-5);
\draw[black, thick] (0,-6) -- (3,-9);
\draw[black, thick] (0,-7) -- (2,-9);
\draw[black, thick] (0,-8) -- (1,-9);
\draw[black, thick] (0,-6) -- (-3,-9);
\draw[black, thick] (0,-7) -- (-2,-9);
\draw[black, thick] (0,-8) -- (-1,-9);
\draw[black, thick] (0,-10) -- (4,-14);
\draw[black, thick] (-1,-11) -- (3,-15);
\draw[black, thick] (-2,-12) -- (2,-16);
\draw[black, thick] (-3,-13) -- (1,-17);
\draw[black, thick] (-4,-14) -- (0,-18);
\draw[black, thick] (0,-10) -- (-4,-14);
\draw[black, thick] (1,-11) -- (-3,-15);
\draw[black, thick] (2,-12) -- (-2,-16);
\draw[black, thick] (3,-13) -- (-1,-17);
\draw[black, thick] (4,-14) -- (0,-18);
\draw[black, thick] (1,-2) -- (0,-6);
\draw[black, thick] (1,-4) -- (0,-7);
\draw[black, thick] (1,-5) -- (0,-8);
\draw[black, thick] (1,-7) -- (0,-9);
\draw[black, thick] (1,-9) -- (0,-10);
\draw[black, thick] (-1,-2) -- (0,-6);
\draw[black, thick] (-1,-4) -- (0,-7);
\draw[black, thick] (-1,-5) -- (0,-8);
\draw[black, thick] (-1,-7) -- (0,-9);
\draw[black, thick] (-1,-9) -- (0,-10);
\draw[black, thick] (2,-8) -- (1,-9);
\draw[black, thick] (2,-9) -- (1,-11);
\draw[black, thick] (-2,-8) -- (-1,-9);
\draw[black, thick] (-2,-9) -- (-1,-11);
\draw[black, thick] (3,-9) -- (2,-12);
\draw[black, thick] (-3,-9) -- (-2,-12);

\filldraw[black] (0,0) circle (2pt);
\filldraw[black] (0,-1) circle (2pt);
\filldraw[black] (0,-2) circle (2pt);
\filldraw[black] (0,-3) circle (2pt);
\filldraw[black] (0,-4) circle (2pt);
\filldraw[black] (0,-5) circle (2pt);
\filldraw[black] (0,-6) circle (2pt);
\filldraw[black] (0,-7) circle (2pt);
\filldraw[black] (0,-8) circle (2pt);
\filldraw[black] (0,-9) circle (2pt);
\filldraw[black] (0,-10) circle (2pt);
\filldraw[black] (0,-12) circle (2pt);
\filldraw[black] (0,-14) circle (2pt);
\filldraw[black] (0,-16) circle (2pt);
\filldraw[black] (0,-18) circle (2pt);
\filldraw[black] (1,-2) circle (2pt);
\filldraw[black] (1,-4) circle (2pt);
\filldraw[black] (1,-5) circle (2pt);
\filldraw[black] (1,-7) circle (2pt);
\filldraw[black] (1,-8) circle (2pt);
\filldraw[black] (1,-9) circle (2pt);
\filldraw[black] (1,-11) circle (2pt);
\filldraw[black] (1,-13) circle (2pt);
\filldraw[black] (1,-15) circle (2pt);
\filldraw[black] (1,-17) circle (2pt);
\filldraw[black] (-1,-2) circle (2pt);
\filldraw[black] (-1,-4) circle (2pt);
\filldraw[black] (-1,-5) circle (2pt);
\filldraw[black] (-1,-7) circle (2pt);
\filldraw[black] (-1,-8) circle (2pt);
\filldraw[black] (-1,-9) circle (2pt);
\filldraw[black] (-1,-11) circle (2pt);
\filldraw[black] (-1,-13) circle (2pt);
\filldraw[black] (-1,-15) circle (2pt);
\filldraw[black] (-1,-17) circle (2pt);
\filldraw[black] (2,-5) circle (2pt);
\filldraw[black] (2,-8) circle (2pt);
\filldraw[black] (2,-9) circle (2pt);
\filldraw[black] (2,-12) circle (2pt);
\filldraw[black] (2,-14) circle (2pt);
\filldraw[black] (2,-16) circle (2pt);
\filldraw[black] (-2,-5) circle (2pt);
\filldraw[black] (-2,-8) circle (2pt);
\filldraw[black] (-2,-9) circle (2pt);
\filldraw[black] (-2,-12) circle (2pt);
\filldraw[black] (-2,-14) circle (2pt);
\filldraw[black] (-2,-16) circle (2pt);
\filldraw[black] (3,-9) circle (2pt);
\filldraw[black] (3,-13) circle (2pt);
\filldraw[black] (3,-15) circle (2pt);
\filldraw[black] (-3,-9) circle (2pt);
\filldraw[black] (-3,-13) circle (2pt);
\filldraw[black] (-3,-15) circle (2pt);
\filldraw[black] (4,-14) circle (2pt);
\filldraw[black] (-4,-14) circle (2pt);

\draw[black, thick] (10,0) -- (10,-18);
\draw[black, thick] (11,-2) -- (11,-17);
\draw[black, thick] (9,-2) -- (9,-17);
\draw[black, thick] (12,-5) -- (12,-16);
\draw[black, thick] (8,-5) -- (8,-16);
\draw[black, thick] (13,-9) -- (13,-15);
\draw[black, thick] (7,-9) -- (7,-15);
\draw[black, thick] (10,-1) -- (11,-2);
\draw[black, thick] (10,-1) -- (9,-2);
\draw[black, thick] (10,-3) -- (12,-5);
\draw[black, thick] (10,-4) -- (11,-5);
\draw[black, thick] (10,-3) -- (8,-5);
\draw[black, thick] (10,-4) -- (9,-5);
\draw[black, thick] (10,-6) -- (13,-9);
\draw[black, thick] (10,-7) -- (12,-9);
\draw[black, thick] (10,-8) -- (11,-9);
\draw[black, thick] (10,-6) -- (7,-9);
\draw[black, thick] (10,-7) -- (8,-9);
\draw[black, thick] (10,-8) -- (9,-9);
\draw[black, thick] (10,-10) -- (14,-14);
\draw[black, thick] (10,-12) -- (13,-15);
\draw[black, thick] (10,-14) -- (12,-16);
\draw[black, thick] (10,-16) -- (11,-17);
\draw[black, thick] (10,-10) -- (6,-14);
\draw[black, thick] (10,-12) -- (7,-15);
\draw[black, thick] (10,-14) -- (8,-16);
\draw[black, thick] (10,-16) -- (9,-17);
\draw[black, thick] (11,-2) -- (10,-7);
\draw[black, thick] (11,-4) -- (10,-8);
\draw[black, thick] (11,-5) -- (10,-9);
\draw[black, thick] (11,-8) -- (10,-10);
\draw[black, thick] (11,-9) -- (10,-12);
\draw[black, thick] (11,-11) -- (10,-14);
\draw[black, thick] (11,-13) -- (10,-16);
\draw[black, thick] (11,-15) -- (10,-18);
\draw[black, thick] (9,-2) -- (10,-7);
\draw[black, thick] (9,-4) -- (10,-8);
\draw[black, thick] (9,-5) -- (10,-9);
\draw[black, thick] (9,-8) -- (10,-10);
\draw[black, thick] (9,-9) -- (10,-12);
\draw[black, thick] (9,-11) -- (10,-14);
\draw[black, thick] (9,-13) -- (10,-16);
\draw[black, thick] (9,-15) -- (10,-18);
\draw[black, thick] (12,-5) -- (11,-9);
\draw[black, thick] (12,-8) -- (11,-11);
\draw[black, thick] (12,-9) -- (11,-13);
\draw[black, thick] (12,-12) -- (11,-15);
\draw[black, thick] (12,-14) -- (11,-17);
\draw[black, thick] (8,-5) -- (9,-9);
\draw[black, thick] (8,-8) -- (9,-11);
\draw[black, thick] (8,-9) -- (9,-13);
\draw[black, thick] (8,-12) -- (9,-15);
\draw[black, thick] (8,-14) -- (9,-17);
\draw[black, thick] (13,-9) -- (12,-14);
\draw[black, thick] (13,-13) -- (12,-16);
\draw[black, thick] (7,-9) -- (8,-14);
\draw[black, thick] (7,-13) -- (8,-16);

\filldraw[black] (10,0) circle (2pt);
\filldraw[black] (10,-1) circle (2pt);
\filldraw[black] (10,-2) circle (2pt);
\filldraw[black] (10,-3) circle (2pt);
\filldraw[black] (10,-4) circle (2pt);
\filldraw[black] (10,-5) circle (2pt);
\filldraw[black] (10,-6) circle (2pt);
\filldraw[black] (10,-7) circle (2pt);
\filldraw[black] (10,-8) circle (2pt);
\filldraw[black] (10,-9) circle (2pt);
\filldraw[black] (10,-10) circle (2pt);
\filldraw[black] (10,-12) circle (2pt);
\filldraw[black] (10,-14) circle (2pt);
\filldraw[black] (10,-16) circle (2pt);
\filldraw[black] (10,-18) circle (2pt);
\filldraw[black] (11,-2) circle (2pt);
\filldraw[black] (11,-4) circle (2pt);
\filldraw[black] (11,-5) circle (2pt);
\filldraw[black] (11,-7) circle (2pt);
\filldraw[black] (11,-8) circle (2pt);
\filldraw[black] (11,-9) circle (2pt);
\filldraw[black] (11,-11) circle (2pt);
\filldraw[black] (11,-13) circle (2pt);
\filldraw[black] (11,-15) circle (2pt);
\filldraw[black] (11,-17) circle (2pt);
\filldraw[black] (9,-2) circle (2pt);
\filldraw[black] (9,-4) circle (2pt);
\filldraw[black] (9,-5) circle (2pt);
\filldraw[black] (9,-7) circle (2pt);
\filldraw[black] (9,-8) circle (2pt);
\filldraw[black] (9,-9) circle (2pt);
\filldraw[black] (9,-11) circle (2pt);
\filldraw[black] (9,-13) circle (2pt);
\filldraw[black] (9,-15) circle (2pt);
\filldraw[black] (9,-17) circle (2pt);
\filldraw[black] (12,-5) circle (2pt);
\filldraw[black] (12,-8) circle (2pt);
\filldraw[black] (12,-9) circle (2pt);
\filldraw[black] (12,-12) circle (2pt);
\filldraw[black] (12,-14) circle (2pt);
\filldraw[black] (12,-16) circle (2pt);
\filldraw[black] (8,-5) circle (2pt);
\filldraw[black] (8,-8) circle (2pt);
\filldraw[black] (8,-9) circle (2pt);
\filldraw[black] (8,-12) circle (2pt);
\filldraw[black] (8,-14) circle (2pt);
\filldraw[black] (8,-16) circle (2pt);
\filldraw[black] (13,-9) circle (2pt);
\filldraw[black] (13,-13) circle (2pt);
\filldraw[black] (13,-15) circle (2pt);
\filldraw[black] (7,-9) circle (2pt);
\filldraw[black] (7,-13) circle (2pt);
\filldraw[black] (7,-15) circle (2pt);
\filldraw[black] (14,-14) circle (2pt);
\filldraw[black] (6,-14) circle (2pt);

\end{tikzpicture}
\caption{Firing order poset for $n=10$ (left) and $n=11$ (right).}
  \label{fign10}
\setlength{\belowcaptionskip}{-10pt}
\end{figure}
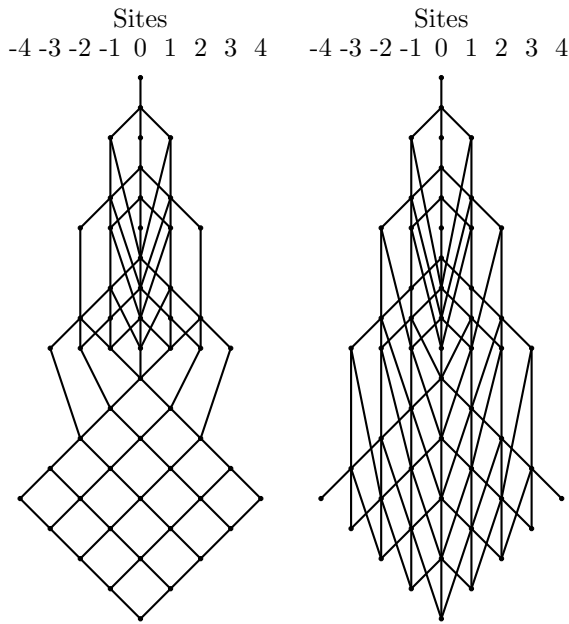

\begin{figure}
  \begin{center}
    \input{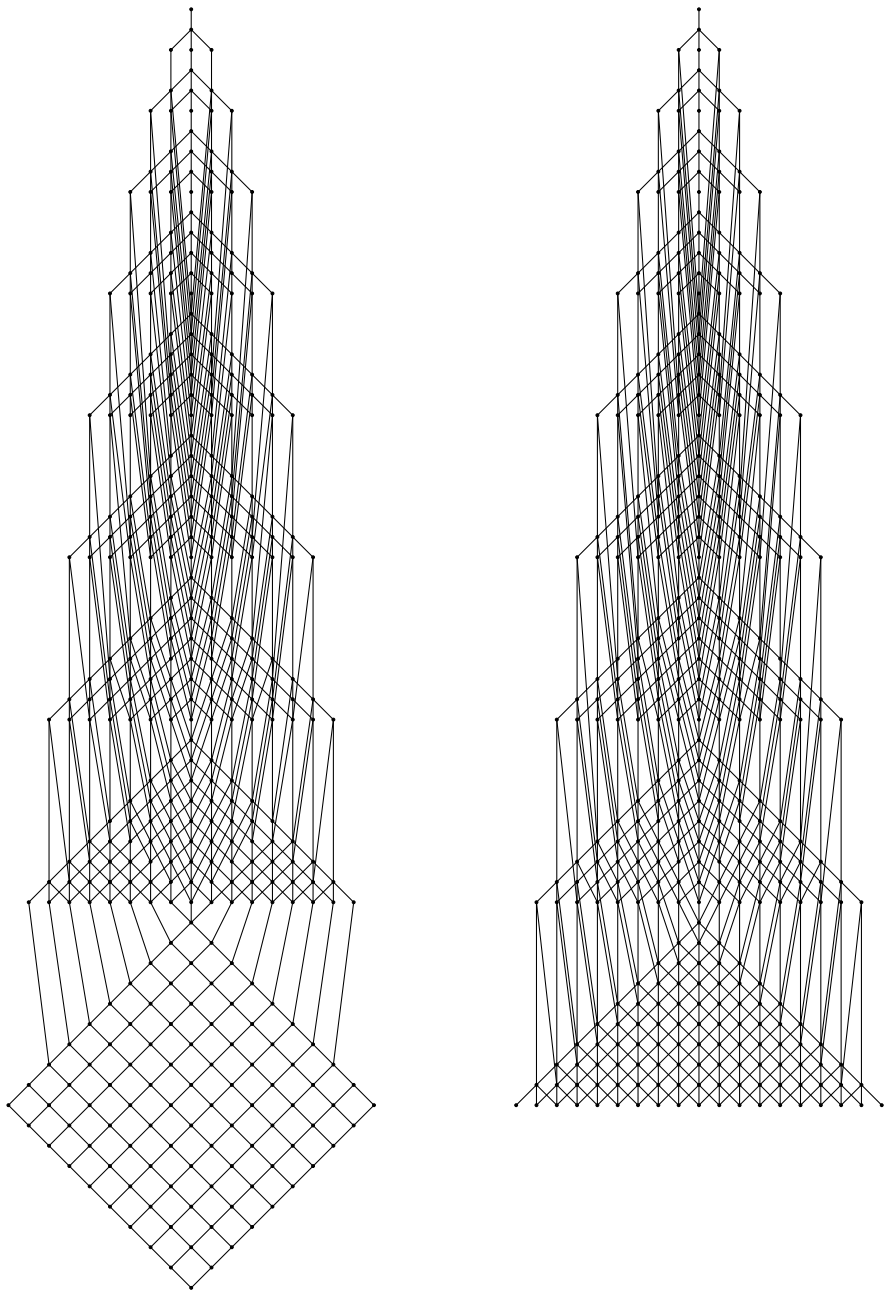}
\end{center}
\caption{Firing order poset for $n=20$ (left) and $n=21$ (right).}
  \label{fign20}
\end{figure}

In fact, if we only consider the diamond at the bottom of the hasse diagrams in the even case, the resulting figure forms the hasse diagram for the move poset for a specific class of chip configurations.  If we have $n=2m$ chips, then the diamond represents the configuration poset beginning with a configuration with 1 chip each at sites $-(m-1)$ through $-1$ and sites 1 through $m-1$, and 2 chips at site 0.  We call this configuration $c$.

The nice structure of the move poset suggests that there may be more structure to the poset configurations.  Jim Propp conjectured that the configuration poset may actually form a distributive lattice, and we in fact show that this is the case.

\begin{figure}
\centering
\begin{tikzpicture}[scale=0.4]

\node at (0,2) {Sites};
\node at (-4,1) {-4};
\node at (-3,1) {-3};
\node at (-2,1) {-2};
\node at (-1,1) {-1};
\node at (0,1) {0};
\node at (1,1) {1};
\node at (2,1) {2};
\node at (3,1) {3};
\node at (4,1) {4};


\draw[black, thick] (0,0) -- (4,-4);
\draw[black, thick] (-1,-1) -- (3,-5);
\draw[black, thick] (-2,-2) -- (2,-6);
\draw[black, thick] (-3,-3) -- (1,-7);
\draw[black, thick] (-4,-4) -- (0,-8);
\draw[black, thick] (0,0) -- (-4,-4);
\draw[black, thick] (1,-1) -- (-3,-5);
\draw[black, thick] (2,-2) -- (-2,-6);
\draw[black, thick] (3,-3) -- (-1,-7);
\draw[black, thick] (4,-4) -- (0,-8);

\filldraw[black] (0,0) circle (2pt);
\filldraw[black] (0,-2) circle (2pt);
\filldraw[black] (0,-4) circle (2pt);
\filldraw[black] (0,-6) circle (2pt);
\filldraw[black] (0,-8) circle (2pt);
\filldraw[black] (1,-1) circle (2pt);
\filldraw[black] (1,-3) circle (2pt);
\filldraw[black] (1,-5) circle (2pt);
\filldraw[black] (1,-7) circle (2pt);
\filldraw[black] (-1,-1) circle (2pt);
\filldraw[black] (-1,-3) circle (2pt);
\filldraw[black] (-1,-5) circle (2pt);
\filldraw[black] (-1,-7) circle (2pt);
\filldraw[black] (2,-2) circle (2pt);
\filldraw[black] (2,-4) circle (2pt);
\filldraw[black] (2,-6) circle (2pt);
\filldraw[black] (-2,-2) circle (2pt);
\filldraw[black] (-2,-4) circle (2pt);
\filldraw[black] (-2,-6) circle (2pt);
\filldraw[black] (3,-3) circle (2pt);
\filldraw[black] (3,-5) circle (2pt);
\filldraw[black] (-3,-3) circle (2pt);
\filldraw[black] (-3,-5) circle (2pt);
\filldraw[black] (4,-4) circle (2pt);
\filldraw[black] (-4,-4) circle (2pt);

\end{tikzpicture}
\caption{Firing order poset for the endgame with $n=10$.}
  \label{fign10end}
\setlength{\belowcaptionskip}{-10pt}
\end{figure}

\begin{figure}
\centering
\begin{tikzpicture}[scale=0.27]

\filldraw[black](0,-45) circle(2 pt);
\filldraw[black](0,-47) circle(2 pt);
\filldraw[black](0,-49) circle(2 pt);
\filldraw[black](0,-51) circle(2 pt);
\filldraw[black](0,-53) circle(2 pt);
\filldraw[black](0,-55) circle(2 pt);
\filldraw[black](0,-57) circle(2 pt);
\filldraw[black](0,-59) circle(2 pt);
\filldraw[black](0,-61) circle(2 pt);
\filldraw[black](0,-63) circle(2 pt);
\filldraw[black](1,-46) circle(2 pt);
\filldraw[black](-1,-46) circle(2 pt);
\filldraw[black](1,-48) circle(2 pt);
\filldraw[black](-1,-48) circle(2 pt);
\filldraw[black](1,-50) circle(2 pt);
\filldraw[black](-1,-50) circle(2 pt);
\filldraw[black](1,-52) circle(2 pt);
\filldraw[black](-1,-52) circle(2 pt);
\filldraw[black](1,-54) circle(2 pt);
\filldraw[black](-1,-54) circle(2 pt);
\filldraw[black](1,-56) circle(2 pt);
\filldraw[black](-1,-56) circle(2 pt);
\filldraw[black](1,-58) circle(2 pt);
\filldraw[black](-1,-58) circle(2 pt);
\filldraw[black](1,-60) circle(2 pt);
\filldraw[black](-1,-60) circle(2 pt);
\filldraw[black](1,-62) circle(2 pt);
\filldraw[black](-1,-62) circle(2 pt);
\filldraw[black](2,-47) circle(2 pt);
\filldraw[black](-2,-47) circle(2 pt);
\filldraw[black](2,-49) circle(2 pt);
\filldraw[black](-2,-49) circle(2 pt);
\filldraw[black](2,-51) circle(2 pt);
\filldraw[black](-2,-51) circle(2 pt);
\filldraw[black](2,-53) circle(2 pt);
\filldraw[black](-2,-53) circle(2 pt);
\filldraw[black](2,-55) circle(2 pt);
\filldraw[black](-2,-55) circle(2 pt);
\filldraw[black](2,-57) circle(2 pt);
\filldraw[black](-2,-57) circle(2 pt);
\filldraw[black](2,-59) circle(2 pt);
\filldraw[black](-2,-59) circle(2 pt);
\filldraw[black](2,-61) circle(2 pt);
\filldraw[black](-2,-61) circle(2 pt);
\filldraw[black](3,-48) circle(2 pt);
\filldraw[black](-3,-48) circle(2 pt);
\filldraw[black](3,-50) circle(2 pt);
\filldraw[black](-3,-50) circle(2 pt);
\filldraw[black](3,-52) circle(2 pt);
\filldraw[black](-3,-52) circle(2 pt);
\filldraw[black](3,-54) circle(2 pt);
\filldraw[black](-3,-54) circle(2 pt);
\filldraw[black](3,-56) circle(2 pt);
\filldraw[black](-3,-56) circle(2 pt);
\filldraw[black](3,-58) circle(2 pt);
\filldraw[black](-3,-58) circle(2 pt);
\filldraw[black](3,-60) circle(2 pt);
\filldraw[black](-3,-60) circle(2 pt);
\filldraw[black](4,-49) circle(2 pt);
\filldraw[black](-4,-49) circle(2 pt);
\filldraw[black](4,-51) circle(2 pt);
\filldraw[black](-4,-51) circle(2 pt);
\filldraw[black](4,-53) circle(2 pt);
\filldraw[black](-4,-53) circle(2 pt);
\filldraw[black](4,-55) circle(2 pt);
\filldraw[black](-4,-55) circle(2 pt);
\filldraw[black](4,-57) circle(2 pt);
\filldraw[black](-4,-57) circle(2 pt);
\filldraw[black](4,-59) circle(2 pt);
\filldraw[black](-4,-59) circle(2 pt);
\filldraw[black](5,-50) circle(2 pt);
\filldraw[black](-5,-50) circle(2 pt);
\filldraw[black](5,-52) circle(2 pt);
\filldraw[black](-5,-52) circle(2 pt);
\filldraw[black](5,-54) circle(2 pt);
\filldraw[black](-5,-54) circle(2 pt);
\filldraw[black](5,-56) circle(2 pt);
\filldraw[black](-5,-56) circle(2 pt);
\filldraw[black](5,-58) circle(2 pt);
\filldraw[black](-5,-58) circle(2 pt);
\filldraw[black](6,-51) circle(2 pt);
\filldraw[black](-6,-51) circle(2 pt);
\filldraw[black](6,-53) circle(2 pt);
\filldraw[black](-6,-53) circle(2 pt);
\filldraw[black](6,-55) circle(2 pt);
\filldraw[black](-6,-55) circle(2 pt);
\filldraw[black](6,-57) circle(2 pt);
\filldraw[black](-6,-57) circle(2 pt);
\filldraw[black](7,-52) circle(2 pt);
\filldraw[black](-7,-52) circle(2 pt);
\filldraw[black](7,-54) circle(2 pt);
\filldraw[black](-7,-54) circle(2 pt);
\filldraw[black](7,-56) circle(2 pt);
\filldraw[black](-7,-56) circle(2 pt);
\filldraw[black](8,-53) circle(2 pt);
\filldraw[black](-8,-53) circle(2 pt);
\filldraw[black](8,-55) circle(2 pt);
\filldraw[black](-8,-55) circle(2 pt);
\filldraw[black](9,-54) circle(2 pt);
\filldraw[black](-9,-54) circle(2 pt);

\draw[black](0,-45) -- (1,-46);
\draw[black](0,-45) -- (-1,-46);
\draw[black](0,-47) -- (1,-48);
\draw[black](0,-47) -- (-1,-48);
\draw[black](0,-49) -- (1,-50);
\draw[black](0,-49) -- (-1,-50);
\draw[black](0,-51) -- (1,-52);
\draw[black](0,-51) -- (-1,-52);
\draw[black](0,-53) -- (1,-54);
\draw[black](0,-53) -- (-1,-54);
\draw[black](0,-55) -- (1,-56);
\draw[black](0,-55) -- (-1,-56);
\draw[black](0,-57) -- (1,-58);
\draw[black](0,-57) -- (-1,-58);
\draw[black](0,-59) -- (1,-60);
\draw[black](0,-59) -- (-1,-60);
\draw[black](0,-61) -- (1,-62);
\draw[black](0,-61) -- (-1,-62);
\draw[black](1,-46) -- (2,-47);
\draw[black](-1,-46) -- (-2,-47);
\draw[black](1,-48) -- (2,-49);
\draw[black](-1,-48) -- (-2,-49);
\draw[black](1,-50) -- (2,-51);
\draw[black](-1,-50) -- (-2,-51);
\draw[black](1,-52) -- (2,-53);
\draw[black](-1,-52) -- (-2,-53);
\draw[black](1,-54) -- (2,-55);
\draw[black](-1,-54) -- (-2,-55);
\draw[black](1,-56) -- (2,-57);
\draw[black](-1,-56) -- (-2,-57);
\draw[black](1,-58) -- (2,-59);
\draw[black](-1,-58) -- (-2,-59);
\draw[black](1,-60) -- (2,-61);
\draw[black](-1,-60) -- (-2,-61);
\draw[black](2,-47) -- (3,-48);
\draw[black](-2,-47) -- (-3,-48);
\draw[black](2,-49) -- (3,-50);
\draw[black](-2,-49) -- (-3,-50);
\draw[black](2,-51) -- (3,-52);
\draw[black](-2,-51) -- (-3,-52);
\draw[black](2,-53) -- (3,-54);
\draw[black](-2,-53) -- (-3,-54);
\draw[black](2,-55) -- (3,-56);
\draw[black](-2,-55) -- (-3,-56);
\draw[black](2,-57) -- (3,-58);
\draw[black](-2,-57) -- (-3,-58);
\draw[black](2,-59) -- (3,-60);
\draw[black](-2,-59) -- (-3,-60);
\draw[black](3,-48) -- (4,-49);
\draw[black](-3,-48) -- (-4,-49);
\draw[black](3,-50) -- (4,-51);
\draw[black](-3,-50) -- (-4,-51);
\draw[black](3,-52) -- (4,-53);
\draw[black](-3,-52) -- (-4,-53);
\draw[black](3,-54) -- (4,-55);
\draw[black](-3,-54) -- (-4,-55);
\draw[black](3,-56) -- (4,-57);
\draw[black](-3,-56) -- (-4,-57);
\draw[black](3,-58) -- (4,-59);
\draw[black](-3,-58) -- (-4,-59);
\draw[black](4,-49) -- (5,-50);
\draw[black](-4,-49) -- (-5,-50);
\draw[black](4,-51) -- (5,-52);
\draw[black](-4,-51) -- (-5,-52);
\draw[black](4,-53) -- (5,-54);
\draw[black](-4,-53) -- (-5,-54);
\draw[black](4,-55) -- (5,-56);
\draw[black](-4,-55) -- (-5,-56);
\draw[black](4,-57) -- (5,-58);
\draw[black](-4,-57) -- (-5,-58);
\draw[black](5,-50) -- (6,-51);
\draw[black](-5,-50) -- (-6,-51);
\draw[black](5,-52) -- (6,-53);
\draw[black](-5,-52) -- (-6,-53);
\draw[black](5,-54) -- (6,-55);
\draw[black](-5,-54) -- (-6,-55);
\draw[black](5,-56) -- (6,-57);
\draw[black](-5,-56) -- (-6,-57);
\draw[black](6,-51) -- (7,-52);
\draw[black](-6,-51) -- (-7,-52);
\draw[black](6,-53) -- (7,-54);
\draw[black](-6,-53) -- (-7,-54);
\draw[black](6,-55) -- (7,-56);
\draw[black](-6,-55) -- (-7,-56);
\draw[black](7,-52) -- (8,-53);
\draw[black](-7,-52) -- (-8,-53);
\draw[black](7,-54) -- (8,-55);
\draw[black](-7,-54) -- (-8,-55);
\draw[black](8,-53) -- (9,-54);
\draw[black](-8,-53) -- (-9,-54);
\draw[black](1,-46) -- (0,-47);
\draw[black](-1,-46) -- (0,-47);
\draw[black](1,-48) -- (0,-49);
\draw[black](-1,-48) -- (0,-49);
\draw[black](1,-50) -- (0,-51);
\draw[black](-1,-50) -- (0,-51);
\draw[black](1,-52) -- (0,-53);
\draw[black](-1,-52) -- (0,-53);
\draw[black](1,-54) -- (0,-55);
\draw[black](-1,-54) -- (0,-55);
\draw[black](1,-56) -- (0,-57);
\draw[black](-1,-56) -- (0,-57);
\draw[black](1,-58) -- (0,-59);
\draw[black](-1,-58) -- (0,-59);
\draw[black](1,-60) -- (0,-61);
\draw[black](-1,-60) -- (0,-61);
\draw[black](1,-62) -- (0,-63);
\draw[black](-1,-62) -- (0,-63);
\draw[black](2,-47) -- (1,-48);
\draw[black](-2,-47) -- (-1,-48);
\draw[black](2,-49) -- (1,-50);
\draw[black](-2,-49) -- (-1,-50);
\draw[black](2,-51) -- (1,-52);
\draw[black](-2,-51) -- (-1,-52);
\draw[black](2,-53) -- (1,-54);
\draw[black](-2,-53) -- (-1,-54);
\draw[black](2,-55) -- (1,-56);
\draw[black](-2,-55) -- (-1,-56);
\draw[black](2,-57) -- (1,-58);
\draw[black](-2,-57) -- (-1,-58);
\draw[black](2,-59) -- (1,-60);
\draw[black](-2,-59) -- (-1,-60);
\draw[black](2,-61) -- (1,-62);
\draw[black](-2,-61) -- (-1,-62);
\draw[black](3,-48) -- (2,-49);
\draw[black](-3,-48) -- (-2,-49);
\draw[black](3,-50) -- (2,-51);
\draw[black](-3,-50) -- (-2,-51);
\draw[black](3,-52) -- (2,-53);
\draw[black](-3,-52) -- (-2,-53);
\draw[black](3,-54) -- (2,-55);
\draw[black](-3,-54) -- (-2,-55);
\draw[black](3,-56) -- (2,-57);
\draw[black](-3,-56) -- (-2,-57);
\draw[black](3,-58) -- (2,-59);
\draw[black](-3,-58) -- (-2,-59);
\draw[black](3,-60) -- (2,-61);
\draw[black](-3,-60) -- (-2,-61);
\draw[black](4,-49) -- (3,-50);
\draw[black](-4,-49) -- (-3,-50);
\draw[black](4,-51) -- (3,-52);
\draw[black](-4,-51) -- (-3,-52);
\draw[black](4,-53) -- (3,-54);
\draw[black](-4,-53) -- (-3,-54);
\draw[black](4,-55) -- (3,-56);
\draw[black](-4,-55) -- (-3,-56);
\draw[black](4,-57) -- (3,-58);
\draw[black](-4,-57) -- (-3,-58);
\draw[black](4,-59) -- (3,-60);
\draw[black](-4,-59) -- (-3,-60);
\draw[black](5,-50) -- (4,-51);
\draw[black](-5,-50) -- (-4,-51);
\draw[black](5,-52) -- (4,-53);
\draw[black](-5,-52) -- (-4,-53);
\draw[black](5,-54) -- (4,-55);
\draw[black](-5,-54) -- (-4,-55);
\draw[black](5,-56) -- (4,-57);
\draw[black](-5,-56) -- (-4,-57);
\draw[black](5,-58) -- (4,-59);
\draw[black](-5,-58) -- (-4,-59);
\draw[black](6,-51) -- (5,-52);
\draw[black](-6,-51) -- (-5,-52);
\draw[black](6,-53) -- (5,-54);
\draw[black](-6,-53) -- (-5,-54);
\draw[black](6,-55) -- (5,-56);
\draw[black](-6,-55) -- (-5,-56);
\draw[black](6,-57) -- (5,-58);
\draw[black](-6,-57) -- (-5,-58);
\draw[black](7,-52) -- (6,-53);
\draw[black](-7,-52) -- (-6,-53);
\draw[black](7,-54) -- (6,-55);
\draw[black](-7,-54) -- (-6,-55);
\draw[black](7,-56) -- (6,-57);
\draw[black](-7,-56) -- (-6,-57);
\draw[black](8,-53) -- (7,-54);
\draw[black](-8,-53) -- (-7,-54);
\draw[black](8,-55) -- (7,-56);
\draw[black](-8,-55) -- (-7,-56);
\draw[black](9,-54) -- (8,-55);
\draw[black](-9,-54) -- (-8,-55);

\end{tikzpicture}
\caption{Firing order poset for the endgame with $n=20$.}
  \label{fign20end}
\setlength{\belowcaptionskip}{-10pt}
\end{figure}

We will show that the lattice of configurations in the ``endgame'' of chip-firing on a line is distributive, and then provide a more direct proof that the join-irreducibles of the poset are elements of the firing-move poset.

We will provide a simpler version of the proof of Theorem \ref{join} for this special case.  Define $mv(c_1)$ to be the set of moves needed to get to state $c_1$ from state $c$.  It is known that this is uniquely determined for a state $c_1$, even if the order of the moves is not.

\begin{lemma}
If $mv(c_1) \subseteq mv(c_2)$, then it is possible to get from $c_1$ to $c_2$ through a sequence of firing moves.
\end{lemma}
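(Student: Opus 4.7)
My plan is to reduce the lemma to the abelian (diamond) property of chip-firing, in the following form: if $\sigma_1$ is any legal firing sequence from $c$ to $c_1$ and $\sigma_2$ is any legal firing sequence from $c$ to $c_2$, then when $mv(c_1)\subseteq mv(c_2)$, the sequence $\sigma_2$ can be reordered into another legal firing sequence from $c$ to $c_2$ whose initial segment is a permutation of $\sigma_1$. Once this is established, the suffix of the reordered $\sigma_2$ is automatically a legal firing sequence from $c_1$ to $c_2$, which is exactly what the lemma asks for.

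I would prove the reordering claim by induction on $|mv(c_1)|$; the base case $mv(c_1)=\emptyset$ is immediate. For the inductive step, let $k^1$ be the first move of $\sigma_1$, and observe that $k^1$ is legal at $c$, so site $k$ has at least $\deg(k)$ chips in $c$. Since $k^1\in mv(c_1)\subseteq mv(c_2)$, the move $k^1$ also appears somewhere in $\sigma_2$, say at position $p$, and every earlier move $m_1,\dots,m_{p-1}$ in $\sigma_2$ fires at a site other than $k$, because $k^1$ is by definition the \emph{first} firing at $k$.

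The key swap step is to argue that $k^1,m_1,\dots,m_{p-1},m_{p+1},\dots$ is still a legal firing sequence reaching $c_2$. Firing $k^1$ first only subtracts $\deg(k)$ chips from $k$ and adds one chip to each neighbor of $k$, leaving other sites untouched; so every site other than $k$ has at least as many chips after the swap as it had before. Since none of $m_1,\dots,m_{p-1}$ fire at $k$, their legality depends only on chip counts at sites $\neq k$, which are unchanged or increased, so they remain legal. A straightforward induction on $i$ shows that after performing $k^1,m_1,\dots,m_i$ one reaches exactly the same configuration as after $m_1,\dots,m_i,k^1$, so the tail $m_{p+1},m_{p+2},\dots$ continues to be legal and still ends at $c_2$.

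With the swap in hand, I apply the inductive hypothesis to the shorter pair of sequences starting at the configuration $c'$ reached after firing $k^1$: the tails $\sigma_1\setminus\{k^1\}$ and the reordered $\sigma_2\setminus\{k^1\}$ are legal sequences from $c'$ to $c_1$ and $c_2$ respectively, and $mv(c_1)\setminus\{k^1\}\subseteq mv(c_2)\setminus\{k^1\}$. The induction yields the needed reordering, and prepending $k^1$ gives the desired reordering of the original $\sigma_2$. The main obstacle is justifying the swap, but it reduces to the two standard facts that firings at distinct sites commute and that extra chips never invalidate a legal firing, so the verification is essentially bookkeeping.
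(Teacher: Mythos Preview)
Your argument is correct and is in fact more general than the paper's: it works for any terminating chip-firing game, not only the diamond ``endgame'' on the line. The paper instead inducts on $|mv(c_2)\setminus mv(c_1)|$ and exploits the specific structure of the diamond move poset: it selects a maximal element $k^j$ of $mv(c_2)\setminus mv(c_1)$, observes that its upper covers in the diamond already lie in $mv(c_1)$, and then uses the special feature of the endgame that having exactly those covers performed is what makes $k^j$ fireable from $c_1$. Your swap argument never needs to know \emph{why} a move becomes fireable from $c_1$; you only fire moves that are visibly legal at the current configuration and propagate legality forward via commutation and the monotonicity of chip counts at sites other than $k$. The paper's route ties the lemma directly to the move-poset structure, which dovetails with the corollary and distributivity results that follow, whereas your route is a clean, self-contained abelian-property argument that would establish the lemma for arbitrary chip-firing.
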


\begin{proof}
We proceed by induction on $|mv(c_2) \backslash mv(c_1)|$.  If the cardinality is 0, then $c_1=c_2$, so it is possible to get from $c_1$ to $c_2$ in 0 firing moves.  Now, suppose that it is possible to find such a sequence for all $c_1,c_2$ such that $|mv(c_2) \backslash mv(c_1)|\le n$.  We then consider some $c_1,c_2$ such that $|mv(c_2) \backslash mv(c_1)|= n+1$.  Consider $mv(c_2) \backslash mv(c_1)$ as a subposet of the move poset, and then take a maximal element $k^j$ of the subposet.  Because $k^j \in mv(c_2)$, its covers in the diamond must also be in $mv(c_2)$, and because it is a maximal element of $mv(c_2) \backslash mv(c_1)$, its covers must also be in $mv(c_1)$, so it is possible to perform the firing move $k^j$ from configuration $c_1$.

This yields another configuration $c_1'$ such that $mv(c_1') \subseteq mv(c_2)$ and $|mv(c_2) \backslash mv(c_1')|= n$.  By the inductive hypothesis, it is possible to reach $c_2$ from $c_1'$ through a sequence of firing moves, so it must also be possible to reach $c_2$ from $c_1$.
\end{proof}

\begin{cor}
Any upward closed collection of moves $S$ is equal to $mv(c_1)$ for some configuration $c_1$.
\end{cor}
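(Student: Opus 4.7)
My plan is to induct on $|S|$, constructing $c_1$ by performing the moves of $S$ one at a time starting from $c$. The base case $S=\emptyset$ is immediate with $c_1=c$. For the inductive step, I choose a minimal element $m$ of $S$ in the move poset (in the paper's convention, greater means earlier, so $m$ is a ``latest'' move of $S$) and set $S':=S\setminus\{m\}$. A short check shows that $S'$ is still upward closed: for $x\in S'$ and $y\ge x$, upward closure of $S$ gives $y\in S$, and $y\ne m$ because otherwise $x<m$ with $x\in S$ would contradict the minimality of $m$.

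By the inductive hypothesis applied to $S'$, there exists a configuration $c_1'$ with $mv(c_1')=S'$. To close the induction, I need to show that move $m$ is legal at $c_1'$; firing it then yields a configuration $c_1$ with $mv(c_1)=S'\cup\{m\}=S$. Every cover of $m$ in the move poset satisfies $m^*>m$, hence lies in $S$ by upward closure and is distinct from $m$, so it lies in $mv(c_1')$. This is exactly the situation analyzed in the proof of Lemma 3.1, where having all covers of a move already performed is shown to imply that the move is legally available at the current configuration; I would apply the same structural argument here.

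The main obstacle is precisely that last implication: ``all covers of $m$ in the move poset have been performed $\Rightarrow$ $m$ is legal at $c_1'$.'' Everything else is routine bookkeeping — preservation of upward closure under removal of a minimal element, and the fact that a single legal firing of $m$ adds exactly $\{m\}$ to $mv(\cdot)$. The legality step is a structural property of the endgame move poset already invoked inside Lemma 3.1, so in the write-up I would either reuse that argument verbatim or factor it out as a small auxiliary lemma and cite it here.
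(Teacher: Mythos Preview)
Your proposal is correct and is essentially the same argument the paper gives: the paper's proof simply says ``by the proof of Lemma 3.1, we can apply all of the moves of $S$ in some order starting from $c$,'' and your induction on $|S|$ is precisely an explicit unwinding of that reference, with the same key step (covers of $m$ already performed $\Rightarrow$ $m$ is firable in the endgame diamond). Your identification of that firability implication as the one nontrivial ingredient, to be borrowed from inside the proof of Lemma 3.1, matches the paper exactly.
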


\begin{proof}
We have that $mv(c)=\emptyset$.  By the proof of Lemma 1.1, we can apply all of the moves of $S$ in some order to get from $c$ to some new state.
\end{proof}

\begin{lemma}
The configuration poset forms a lattice where $c_1 \vee c_2$ is the unique state $c_3$ such that $mv(c_3)=mv(c_1) \cap mv(c_2)$, and $c_1 \wedge c_2$ is the unique state $c_4$ such that $mv(c_4)=mv(c_1) \cup mv(c_2)$.
\end{lemma}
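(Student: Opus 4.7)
The plan is to reduce the lattice claim to the order-characterization $c_1 \geq c_2 \iff mv(c_1) \subseteq mv(c_2)$, and then use Corollary 1.2 to realize the proposed meet and join as actual configurations. I would first note this equivalence: the forward direction is immediate since reaching $c_2$ from $c_1$ appends new moves to $mv(c_1)$, and the reverse direction is exactly Lemma 1.1. It is also worth recording that a configuration is determined by its move set, since the chip count at each vertex $v$ equals its initial count plus the number of times each neighbor has fired into $v$, minus $\deg(v)$ times the number of firings at $v$, all of which are encoded in $mv$.

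Next I would verify that $mv(c_1) \cap mv(c_2)$ and $mv(c_1) \cup mv(c_2)$ are upper sets in the move poset. Each $mv(c_i)$ is itself upper-closed: if $k'^{j'} \in mv(c_i)$ and $k^j \geq k'^{j'}$ (so $k^j$ must occur before $k'^{j'}$), then $k^j$ has already been performed on the way to $c_i$. Intersections and unions of upper sets are again upper sets, so Corollary 1.2 yields configurations $c_3$ and $c_4$ with $mv(c_3) = mv(c_1) \cap mv(c_2)$ and $mv(c_4) = mv(c_1) \cup mv(c_2)$.

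The universal properties now follow from the order equivalence. Since $mv(c_3) \subseteq mv(c_i)$, we have $c_3 \geq c_i$ for $i = 1, 2$; and any $c'$ with $c' \geq c_1$ and $c' \geq c_2$ satisfies $mv(c') \subseteq mv(c_1) \cap mv(c_2) = mv(c_3)$, hence $c' \geq c_3$. This identifies $c_3 = c_1 \vee c_2$, and the dual argument (reversing all inclusions) gives $c_4 = c_1 \wedge c_2$. Uniqueness is immediate from configurations being determined by their move sets. There is no genuine obstacle here: Lemma 1.1 and Corollary 1.2 do the heavy lifting, and this proof is essentially bookkeeping about upper sets and inclusions in the move poset.
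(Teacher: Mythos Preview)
Your proof is correct and follows essentially the same approach as the paper's: invoke Corollary 1.2 to realize the intersection and union of move sets as configurations, then use Lemma 1.1 (equivalently, your order characterization $c_1 \geq c_2 \iff mv(c_1) \subseteq mv(c_2)$) to verify the universal properties of join and meet. You are slightly more explicit than the paper in checking that $mv(c_1) \cap mv(c_2)$ and $mv(c_1) \cup mv(c_2)$ are upper-closed before applying Corollary 1.2, and in noting that uniqueness follows from a configuration being determined by its move set, but these are elaborations of the same argument rather than a different route.
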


\begin{proof}
Given configurations $c_1$ and $c_2$, consider the set of moves $m_3=mv(c_1) \cap mv(c_2)$.  By Corollary 1.2, this corresponds to a valid configuration reachable from $c$, which we call $c_3$.  By Lemma 1.1, it is possible to reach $c_1$ or $c_2$ from $c_3$ by a sequence of firing moves, so this configuration is an upper bound for $c_1$ and $c_2$.  Now, any upper bound $c_3'$ must satisfy $mv(c_3') \subseteq mv(c_1) \cap mv(c_2)$ because it is not possible to get from one configuration to another configuration in which fewer moves have occurred at a given site.  As a result, any other upper bound $c_3'$ must satisfy $c_3' \ge c_3$, again by Lemma 1.1, so $c_3=c_1 \vee c_2$.

Now, given $c_1$ and $c_2$, consider the set of moves $m_4=mv(c_1) \cup mv(c_2)$.  By Corollary 1.2, this corresponds to a valid configuration reachable from $c$, which we call $c_4$.  By Lemma 1.1, it is possible to reach $c_4$ from $c_1$ or $c_2$ by a sequence of firing moves, so this configuration is a lower bound for $c_1$ and $c_2$.  Now, any lower bound $c_4'$ must satisfy $mv(c_1) \cup mv(c_2) \subseteq mv(c_4')$ because it is not possible to get from one configuration to another configuration in which fewer moves have occurred at a given site.  As a result, any other lower bound $c_4'$ must satisfy $c_4' \le c_4$, again by Lemma 1.1, so $c_4=c_1 \wedge c_2$.
\end{proof}

\begin{lemma}
The configuration poset forms a distributive lattice.
\end{lemma}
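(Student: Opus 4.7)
My plan is to reduce the claim directly to the fact that union and intersection of sets are mutually distributive, using Lemma 1.3 as the bridge. By Corollary 1.2, the map $c_1 \mapsto mv(c_1)$ is a bijection between the configurations reachable from $c$ and the upward-closed subsets of the move poset, and Lemma 1.3 tells us that this bijection sends $\vee$ to $\cap$ and $\wedge$ to $\cup$. So distributivity of the configuration lattice will follow the moment we observe that the collection of upward-closed subsets of any poset is closed under $\cap$ and $\cup$ (which is immediate) and that $\cap$ distributes over $\cup$ (and vice versa) as an identity of sets.

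Concretely, given three configurations $c_1, c_2, c_3$ reachable from $c$, I would compute
\[
mv\bigl(c_1 \vee (c_2 \wedge c_3)\bigr) \;=\; mv(c_1) \cap \bigl(mv(c_2) \cup mv(c_3)\bigr) \;=\; \bigl(mv(c_1) \cap mv(c_2)\bigr) \cup \bigl(mv(c_1) \cap mv(c_3)\bigr),
\]
where the first equality uses Lemma 1.3 twice and the second is the standard distributive law for sets. Applying Lemma 1.3 again in the other direction, the right-hand side equals $mv\bigl((c_1 \vee c_2) \wedge (c_1 \vee c_3)\bigr)$. Since configurations are determined by their move sets, this gives $c_1 \vee (c_2 \wedge c_3) = (c_1 \vee c_2) \wedge (c_1 \vee c_3)$, which is the distributive identity.

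There is no real obstacle here, because all of the work has been done in Lemmas 1.1 and 1.3 and Corollary 1.2: once the lattice operations on configurations are identified with set-theoretic intersection and union on an isomorphic copy of the lattice of upward-closed subsets of the move poset, distributivity is automatic. The only thing one has to be slightly careful about is that $mv$ is injective on reachable configurations (so that the equality of move sets implies equality of configurations), but this is built into the set-up that $mv(c_1)$ uniquely determines $c_1$.
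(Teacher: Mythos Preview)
Your proof is correct and takes essentially the same approach as the paper: the paper's proof is a one-sentence observation that, since meets and joins correspond to unions and intersections of move sets (Lemma~3.3), distributivity follows from the distributivity of set union and intersection. You have simply written out this argument in more detail.
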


\begin{proof}
Since the meets and joins of two configurations correspond to the intersection and union of the moves needed to obtain them, the distributivity relations follow from the distributivity of set union and intersection.
\end{proof}

\begin{theorem}
The join-irreducibles of the configuration poset correspond to elements of the move poset.
\end{theorem}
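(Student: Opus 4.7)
The plan is to leverage Lemma 1.3 to reduce the claim to the standard Birkhoff correspondence for the lattice of order ideals of a finite poset. By Lemma 1.3 and Corollary 1.2, the map $c \mapsto mv(c)$ is an order-reversing bijection from the configuration poset onto the collection of upward-closed subsets of the move poset $P$, carrying $\vee$ to $\cap$. Equivalently, the map $c \mapsto P \setminus mv(c)$ is an order-preserving isomorphism from the configuration lattice to the lattice of downward-closed subsets of $P$ under inclusion, carrying $\vee$ to $\cup$. So it suffices to identify the join-irreducibles of this lattice of downward-closed subsets.

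Next I would show that these join-irreducibles are precisely the principal order ideals $\downarrow m$ for $m \in P$. If a downward-closed set $D$ has two distinct maximal elements $m_1 \ne m_2$, then $D \setminus \{m_1\}$ and $D \setminus \{m_2\}$ are downward-closed proper subsets of $D$ whose union is $D$, so $D$ is not join-irreducible. Conversely, if $D$ has a unique maximal element $m$, then $D = \downarrow m$, and in any decomposition $D = D_1 \cup D_2$ into downward-closed subsets, whichever summand contains $m$ must also contain $\downarrow m = D$, forcing that summand to equal $D$.

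Finally, transporting this back along $c \mapsto P \setminus mv(c)$, the join-irreducible configurations are exactly those $c$ with $mv(c) = P \setminus \downarrow m$ for some $m = k^j \in P$, giving a bijection between join-irreducibles of the configuration lattice and elements of the move poset. Since $\downarrow m_1 \subseteq \downarrow m_2$ if and only if $m_1 \le m_2$ in $P$, this bijection is an order-isomorphism, so the join-irreducibles form a subposet isomorphic to the move poset. The essential work is already packaged in Lemma 1.3 (the identification of the configuration lattice with a lattice of upward-closed subsets via $mv$); the remaining step is just the standard Birkhoff-style correspondence and presents no substantive obstacle.
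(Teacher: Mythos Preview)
Your proof is correct and follows essentially the same approach as the paper: both establish that $c \mapsto P \setminus mv(c)$ gives an isomorphism from the configuration lattice to the lattice of order ideals of the move poset, and then invoke the Birkhoff correspondence to identify the join-irreducibles with elements of $P$. The paper's version leaves the Birkhoff step implicit (jumping directly from ``order ideals of the move poset form the configuration poset'' to the conclusion), whereas you spell out explicitly why the join-irreducibles of an order-ideal lattice are the principal ideals $\downarrow m$; this makes your write-up slightly more self-contained but does not constitute a different argument.
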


\begin{proof}
We show that the order ideals of the move poset form the configuration poset.  Given an order ideal of the move poset, its complement is an upward-closed subposet $S$ of the move poset, which must satisfy $S=mv(c_1)$ for some configuration $c_1$ by Corollary 1.2.  If a configuration $c_2$ can be reached from another configuration $c_1$, then $mv(c_1) \subseteq mv(c_2)$, and the converse is true by Lemma 1.1.  Thus, the order ideals of the move poset form a subposet of the configuration poset.  Since every configuration corresponds to a unique set of moves that produce it, this subposet must contain all configurations reachable from $c$, so the join-irreducibles of the configuration poset correspond to elements of the move poset.
\end{proof}

\subsection{Related Problems and Counterexamples}

Now that we have shown that the ``endgame'' of chip-firing on the 1D grid forms a distributive lattice, it is natural to wonder whether certain stronger properties may be true.  We conclude with counterexamples to some of these stronger properties.

In particular, we ask if the entire configuration poset forms a distributive lattice.  In the odd case, we can show not only that this is not true, but that it is not true even during the corresponding endgame of the process.  It is already known to be upper locally distributive (ULD), but distributivity does not hold for sufficiently large $n$.  In the odd case, there are counterexamples to the distributivity condition at the last step of the process starting at $n=5$.  In the even case, the endgame is distributive, but earlier parts of the process are not starting at $n=8$.

For $n=5$, we have a counterexample to distributivity that takes place just one move from the end of the process.  In the following chip configurations, the underlined number represents the number of chips at the origin, with the numbers to the left and right representing the numbers of chips at corresponding sites away from the origin.

\begin{equation}
\begin{array}{r@=l}
    x&10\underline{3}01\\
    y&11\underline{0}3\\
    z&3\underline{0}11\\
    y \vee z&2\underline{1}2\\
    x \wedge (y \vee z)&10\underline{3}01\\
    x \wedge y&11\underline{1}11\\
    x \wedge z&11\underline{1}11\\
    (x \wedge y) \vee (x \wedge z)&11\underline{1}11
    
\end{array}
\end{equation}

Note that in the above example, $x$, $y$, and $z$ are all one firing move away from completion, so distributivity does not apply even in the endgame of odd labeled chip-firing.

While the odd case fails to exhibit distributivity even at the end of the process, even the even case is not distributive for sufficiently large $n$.  The smallest counterexample is $n=8$:

\begin{equation}
\begin{array}{r@=l}
    x&20\underline{3}21\\
    y&21\underline{0}5\\
    z&13\underline{0}31\\
    y \vee z&12\underline{1}4\\
    x \wedge (y \vee z)&20\underline{3}21\\
    x \wedge y&21\underline{1}31\\
    x \wedge z&21\underline{1}31\\
    (x \wedge y) \vee (x \wedge z)&21\underline{1}31
    
\end{array}
\end{equation}

This counterexample appears earlier on ($x$, $y$, and $z$ all appear after 8 moves in a 30 move firing sequence).

\begin{figure}
\centering
\begin{tikzpicture}[scale=0.4]

\node at (0,2) {Sites};
\node at (-1,1) {-1};
\node at (0,1) {0};
\node at (1,1) {1};

\filldraw[black](0,0) circle(2 pt);
\filldraw[black](0,-1) circle(2 pt);
\filldraw[black](0,-2) circle(2 pt);
\filldraw[black](1,-2) circle(2 pt);
\filldraw[black](-1,-2) circle(2 pt);

\draw[black](0,-1) -- (1,-2);
\draw[black](0,-1) -- (-1,-2);
\draw[black](0,0) -- (0,-2);

\node at (10,2) {Sites};
\node at (7,1) {-3};
\node at (8,1) {-2};
\node at (9,1) {-1};
\node at (10,1) {0};
\node at (11,1) {1};
\node at (12,1) {2};
\node at (13,1) {3};

\filldraw[black](10,0) circle(2 pt);
\filldraw[black](10,-1) circle(2 pt);
\filldraw[black](10,-2) circle(2 pt);
\filldraw[black](10,-3) circle(2 pt);
\filldraw[black](10,-4) circle(2 pt);
\filldraw[black](10,-5) circle(2 pt);
\filldraw[black](10,-6) circle(2 pt);
\filldraw[black](10,-8) circle(2 pt);
\filldraw[black](10,-10) circle(2 pt);
\filldraw[black](10,-12) circle(2 pt);
\filldraw[black](11,-2) circle(2 pt);
\filldraw[black](9,-2) circle(2 pt);
\filldraw[black](11,-4) circle(2 pt);
\filldraw[black](9,-4) circle(2 pt);
\filldraw[black](11,-5) circle(2 pt);
\filldraw[black](9,-5) circle(2 pt);
\filldraw[black](11,-7) circle(2 pt);
\filldraw[black](9,-7) circle(2 pt);
\filldraw[black](11,-9) circle(2 pt);
\filldraw[black](9,-9) circle(2 pt);
\filldraw[black](11,-11) circle(2 pt);
\filldraw[black](9,-11) circle(2 pt);
\filldraw[black](12,-5) circle(2 pt);
\filldraw[black](8,-5) circle(2 pt);
\filldraw[black](12,-8) circle(2 pt);
\filldraw[black](8,-8) circle(2 pt);
\filldraw[black](12,-10) circle(2 pt);
\filldraw[black](8,-10) circle(2 pt);
\filldraw[black](13,-9) circle(2 pt);
\filldraw[black](7,-9) circle(2 pt);

\draw[black](10,-1) -- (11,-2);
\draw[black](10,-1) -- (9,-2);
\draw[black](10,-3) -- (11,-4);
\draw[black](10,-3) -- (9,-4);
\draw[black](10,-4) -- (11,-5);
\draw[black](10,-4) -- (9,-5);
\draw[black](10,-6) -- (11,-7);
\draw[black](10,-6) -- (9,-7);
\draw[black](10,-8) -- (11,-9);
\draw[black](10,-8) -- (9,-9);
\draw[black](10,-10) -- (11,-11);
\draw[black](10,-10) -- (9,-11);
\draw[black](11,-4) -- (12,-5);
\draw[black](9,-4) -- (8,-5);
\draw[black](11,-7) -- (12,-8);
\draw[black](9,-7) -- (8,-8);
\draw[black](11,-9) -- (12,-10);
\draw[black](9,-9) -- (8,-10);
\draw[black](12,-8) -- (13,-9);
\draw[black](8,-8) -- (7,-9);
\draw[black](11,-2) -- (10,-5);
\draw[black](9,-2) -- (10,-5);
\draw[black](11,-5) -- (10,-6);
\draw[black](9,-5) -- (10,-6);
\draw[black](11,-7) -- (10,-8);
\draw[black](9,-7) -- (10,-8);
\draw[black](11,-9) -- (10,-10);
\draw[black](9,-9) -- (10,-10);
\draw[black](11,-11) -- (10,-12);
\draw[black](9,-11) -- (10,-12);
\draw[black](12,-5) -- (11,-7);
\draw[black](8,-5) -- (9,-7);
\draw[black](12,-8) -- (11,-9);
\draw[black](8,-8) -- (9,-9);
\draw[black](12,-10) -- (11,-11);
\draw[black](8,-10) -- (9,-11);
\draw[black](13,-9) -- (12,-10);
\draw[black](7,-9) -- (8,-10);
\draw[black](10,0) -- (10,-6);
\draw[black](11,-2) -- (11,-5);
\draw[black](9,-2) -- (9,-5);

\end{tikzpicture}
\caption{Move posets for $n=5$ and $n=8$, respectively the first odd and even cases in which many counterexamples start to appear.}
  \label{fign58}
\setlength{\belowcaptionskip}{-10pt}
\end{figure}
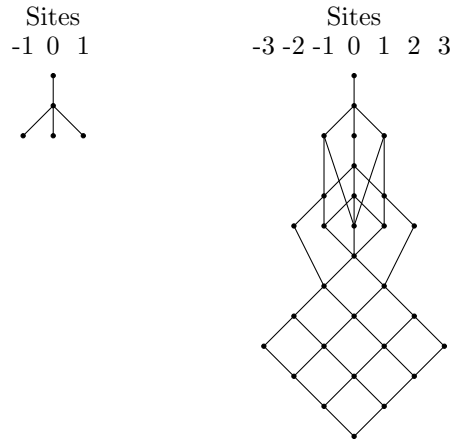

We also examined whether there is a bijection between firing histories and linear orderings of the firing move poset for the entire process.  While this is true for the endgame of labeled chip-firing for $n$ even, it is not true over the entire process.  There is a clear injection from firing histories to linear orderings of the move poset, but there are possible orderings that don't correspond to valid firing sequences.

The issue that arises is that the validity of a firing move at a certain time depends on firing moves that occur at both of the site's neighbors.  Statements of the form ``this move at site $k$ must take place after either this move at site $k-1$ or this move at site $k+1$'' occur regularly in the chip-firing process but are not captured by the poset.  As a result, there are orderings of the poset that do not correspond to legal firing sequences starting at the odd case $n=5$ and the even case $n=8$.

Observing the hasse diagrams from Figure \ref{fign58}, we see that the poset for $n=5$ allows for a linear ordering with firing moves at sites $(0,0,0,1,-1)$.  This, however, would result in a negative number of chips appearing at site 0 after the third firing move, meaning that it is not a valid firing order.

Similarly, the poset for $n=8$ would allow for a linear ordering in which the first 5 firing moves all take place at the origin.  This would also result in a negative number of chips at the origin after the fifth move, so there are also linear orderings that don't correspond to valid firing sequences in this case.

\bibliographystyle{plain}

\bibliography{References}

\end{document}